\documentclass[11pt]{article}

\usepackage[utf8]{inputenc}
\usepackage[T1]{fontenc}
\usepackage{amsmath,amssymb,amsthm}
\usepackage{mathtools}
\usepackage{microtype}
\usepackage[margin=1in]{geometry}
\usepackage[colorlinks=true,linkcolor=blue,citecolor=blue,urlcolor=blue]{hyperref}
\hypersetup{
  pdftitle={Eventual Morse Minimality of Heat Flow on Lens Spaces},
  pdfauthor={Carlos A. Cadavid, Dairo J. Hern\'andez, and Juan Diego V\'elez Caicedo},
  pdfsubject={Heat flow, Morse functions, and lens spaces},
  pdfkeywords={heat equation, Morse functions, lens spaces, Laplace eigenfunctions, Morse--Bott theory, experimental mathematics, AI usage disclosure}
}

\newtheorem{theorem}{Theorem}
\newtheorem{lemma}{Lemma}
\newtheorem{proposition}{Proposition}
\newtheorem{definition}{Definition}
\newtheorem{remark}{Remark}

\title{Eventual Morse Minimality of Heat Flow on Lens Spaces}
\author{Carlos A. Cadavid \and Dairo J. Hern\'andez \and Juan Diego V\'elez Caicedo}

\date{\today}
\begin{document}

\maketitle
\begin{center}
{\small
Carlos A. Cadavid\\
Departamento de Ciencias Matem\'aticas, Universidad EAFIT, Medell\'in, Colombia\\
\texttt{ccadavid@eafit.edu.co}

\vspace{3mm}
Dairo J. Hern\'andez\\
Grupo GIFES, Facultad de Ingenier\'ia, Universidad de la Guajira, Riohacha, La Guajira, Colombia\\
\texttt{djhernandezp@uniguajira.edu.co}

\vspace{3mm}
Juan Diego V\'elez Caicedo\\
Departamento de Matem\'aticas, Universidad Nacional de Colombia, Medell\'in, Colombia\\
\texttt{jdvelez@unal.edu.co}
}
\end{center}

\noindent\textbf{Corresponding author.} Carlos A. Cadavid, \texttt{ccadavid@eafit.edu.co}.

\vspace{3mm}
\begin{abstract}
We prove that heat flow generically produces Morse-minimal functions on round
lens spaces. The result gives a rigorous explanation of a phenomenon first
suggested by computational experiments: after high-frequency modes have been
suppressed by diffusion, generic heat evolutions on lens spaces tend to settle
into Morse functions with the smallest possible number of critical points.
Precisely, for every lens space \(L(p,q)\) with \(p\geq2\),
\(1\leq q\leq p/2\), and \((p,q)=1\), there is an open dense set of initial data
\(f\in L^2(L(p,q),\mathbb R)\) such that the heat evolution \(e^{t\Delta}f\),
where \(\Delta\) is taken in the non-positive sign convention, is, for all
sufficiently large \(t\), a Morse function with exactly four critical points,
of indices \(0,1,2,3\). The proof analyzes the asymptotic spectral expansion of
the heat flow. In the most delicate case \(1<q<p/2\), the leading term is
Morse--Bott with two critical circles, and the higher heat modes break these
circles through a resonant Fourier mechanism. An arithmetic estimate shows that
the fundamental reduced frequency dominates all higher multiples, and generic
nonvanishing of the corresponding resonant coefficients gives the minimal
critical-point count.
\end{abstract}

\noindent\textbf{Keywords.} heat equation; lens spaces; Morse functions; Laplace eigenfunctions; Morse--Bott theory; experimental mathematics.

\medskip
\noindent\textbf{2020 Mathematics Subject Classification.} Primary 58J35; Secondary 57K31, 58E05, 53C21, 35K05.

\section{Introduction}

A smooth function on a closed manifold carries topological information through
its critical points. Morse theory makes this precise: the number and indices of
the critical points of a Morse function constrain, and are constrained by, the
topology of the underlying manifold. It is therefore natural to ask whether
there are canonical analytic procedures that produce topologically efficient
Morse functions.

In this article we study this question for heat flow on round lens spaces. Let
\[
L(p,q)=S^3/\Gamma,
\qquad
\gamma(\alpha,\beta)=(\zeta\alpha,\zeta^q\beta),
\qquad
\zeta=e^{2\pi i/p},
\]
where \(p\geq2\), \(1\leq q\leq p/2\), and \((p,q)=1\). We equip
\(L(p,q)\) with the metric induced from the round metric on \(S^3\). Given an
initial function \(f\in L^2(L(p,q),\mathbb R)\), we consider its heat evolution
\(u(t)=e^{t\Delta}f\).
As \(t\to\infty\), the heat flow suppresses high-frequency spectral components
faster than low-frequency ones. After subtracting the constant part and
rescaling, the long-time behavior is governed by the first nonzero spectral
component of \(f\), with the remaining components appearing as exponentially
small perturbations.

We use the non-positive Laplace--Beltrami convention throughout: if
\(\phi\in E_k\), then
\[
\Delta\phi=-\lambda_k\phi,
\qquad
\lambda_k=k(k+2).
\]
Thus the heat equation is \(\partial_tu=\Delta u\), and the heat semigroup is
\[
e^{t\Delta}\phi=e^{-\lambda_k t}\phi.
\]

This question belongs to a line of work in which heat flow is used as a
spectral selection mechanism for topologically efficient Morse functions. The
phenomenon was proved earlier for flat tori and round spheres
\cite{CadavidVelezHeatToriSpheres}, and later for real and complex projective
spaces \cite{MunozQuinteroProjective}. The present paper treats the first
nontrivial family of spherical space forms beyond projective space. On the
spectral side, our explicit use of lens-space harmonics is compatible with the
classical study of spectra of three-dimensional lens spaces
\cite{IkedaYamamotoLensSpectra} and with explicit eigenmode bases for lens and
prism spaces \cite{LehoucqUzanWeeks}. We use standard background on Morse and
Morse--Bott theory from \cite{MilnorMorse,Nicolaescu,BottMorse}, and standard
spectral facts for the round sphere and its quotients from \cite{Chavel,Besse}.

\subsection*{Experimental origin and guiding principle}

The theorem below was motivated by numerical experiments with heat flow and
spectral truncations on spherical space forms. Starting from random initial data
and evolving by diffusion, the observed critical-point counts on lens spaces
stabilized, after a transient regime, at the Morse-minimal value. These
experiments suggested that heat flow was not merely smoothing functions, but
selecting a topologically efficient Morse function from the spectral data of the
space.

The computations were especially useful in the cases in which the leading heat
term is Morse--Bott rather than Morse. They indicated that the critical circles
of the first nonconstant mode are broken by a specific Fourier frequency on the
circle, and that this frequency is the fundamental quotient frequency rather
than a higher multiple. This observation led to the resonant heat-weight analysis
used below: one first performs a Morse--Bott reduction, then identifies the
first possible reduced Fourier mode, and finally proves arithmetically that this
mode dominates all higher reduced frequencies. The proof in this paper is not
computer-assisted; the experiments serve as the discovery mechanism and as the
organizing guide for the formal argument.

The main result is that, for generic initial data, this analytic process
eventually produces a Morse-minimal function. More precisely, we prove the
following statement.

\begin{theorem}[Main theorem]\label{thm:main}
Let \(L(p,q)\) be a round lens space as above. Then there exists an open dense
subset \(\mathcal U_{p,q}\subset L^2(L(p,q),\mathbb R)\) such that, for every
\(f\in\mathcal U_{p,q}\), there is \(T_f>0\) with the following property: for
every \(t>T_f\), the heat flow \(u(t)=e^{t\Delta}f\) is Morse and has exactly
four critical points. Their Morse indices are \(0,1,2,3\). In particular,
\(u(t)\) is Morse-minimal for all sufficiently large \(t\).
\end{theorem}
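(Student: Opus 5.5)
The plan is to read off the long-time shape of \(u(t)\) from the spectral decomposition of \(f\), and to impose genericity only through finitely many coefficients that depend continuously on \(f\in L^2\).

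\textbf{Step 1: the \(\Gamma\)-invariant harmonics.} Write \(f=\sum_k f_k\) with \(f_k\in E_k^\Gamma\), the \(\Gamma\)-invariant degree-\(k\) spherical harmonics. These are spanned by harmonic projections of monomials \(\alpha^a\bar\alpha^b\beta^c\bar\beta^d\) with \(a+b+c+d=k\) and
\[
(a-b)+q(c-d)\equiv 0 \pmod p .
\]
Let \(k_0\) be the first degree with \(E_{k_0}^\Gamma\neq 0\). Then
\[
e^{\lambda_{k_0}t}\bigl(u(t)-\bar f\bigr)=f_{k_0}+\sum_{k>k_0}e^{-(\lambda_k-\lambda_{k_0})t}f_k ,
\]
and the remainder is exponentially small in every \(C^m\) norm, by the standard eigenfunction bounds on \(S^3\).

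\textbf{Step 2: the leading term, case by case.}
\begin{itemize}
\item \(p=2\), i.e. \(\mathbb{RP}^3\). Here \(k_0=2\) and \(f_{k_0}\) is a traceless quadratic form on \(\mathbb R^4\). On the open dense set where its eigenvalues are distinct, it is Morse on \(\mathbb{RP}^3\) with four critical points of indices \(0,1,2,3\), and Morse stability finishes this case.
\item \(p\ge3\). Then \(k_0=2\), and \(E_2^\Gamma\) is spanned by \(|\alpha|^2-|\beta|^2\), together with \(\operatorname{Re}\alpha\bar\beta\) and \(\operatorname{Im}\alpha\bar\beta\) when \(q=1\). In every subcase a generic \(f_2\) can be brought, by an isometry of \(L(p,q)\) (a \(U(1)\times U(1)\) or \(U(2)\) rotation commuting with \(\Gamma\)), to \(c(|\alpha|^2-|\beta|^2)\) with \(c\neq0\).
\end{itemize}
This function is Morse--Bott on \(L(p,q)\). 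Its critical set is the two circles \(C_\alpha=\{\beta=0\}/\Gamma\) (the maximum) and \(C_\beta=\{\alpha=0\}/\Gamma\) (the minimum), and the normal Hessians are nondegenerate.

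\textbf{Step 3: Morse--Bott reduction.} For large \(t\), the critical points of \(u(t)\) lie in tubular neighbourhoods of \(C_\alpha\) and \(C_\beta\). A Lyapunov--Schmidt reduction in the normal directions puts them in bijection with the critical points of a reduced function \(g_t\) on each circle. Each nondegenerate critical point of \(g_t\) gives a nondegenerate critical point of \(u(t)\), whose index is the normal index plus the index of \(g_t\). That is \(2+\{0,1\}\) on \(C_\alpha\) and \(0+\{0,1\}\) on \(C_\beta\).

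On \(C_\alpha\) use the coordinate \(\alpha=e^{i\theta}\). The quotient identifies \(\theta\sim\theta+2\pi/p\), so \(g_t\) only carries Fourier modes \(e^{ipm\theta}\). To first order, \(g_t\) is the restriction to \(C_\alpha\) of the higher heat modes. A degree-\(k\) harmonic restricted to \(\beta=0\) contains \(\alpha^a\bar\alpha^b\) with \(a+b=k\), and frequency \(pm\) requires \(k\ge p|m|\). Hence the first possible nonconstant contribution is frequency \(p\), from \(\alpha^p\) (and its conjugate) in \(E_p^\Gamma\), with weight \(e^{-(\lambda_p-\lambda_2)t}\). The same holds on \(C_\beta\) with \(\beta^p\), since \(pq'\equiv\) the corresponding congruence there.

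\textbf{Step 4: the arithmetic estimate (main obstacle).} I must show that every other source of a nonconstant Fourier mode in \(g_t\) is \(o\bigl(e^{-(\lambda_p-\lambda_2)t}\bigr)\), uniformly after differentiating in \(\theta\). There are two kinds of source.
\begin{itemize}
\item Direct restrictions of frequency \(pm\) with \(|m|\ge2\). These have weight \(e^{-(\lambda_{p|m|}-\lambda_2)t}\), which is strictly smaller because \(\lambda_k=k(k+2)\) is increasing.
\item Second-order reduction terms, i.e. products of normal derivatives of modes \(k_1\), \(k_2\). Their weight is \(e^{-(\lambda_{k_1}+\lambda_{k_2}-2\lambda_2)t}\), and they can produce frequency \(p\) only if the combined \(\theta\)-frequencies match.
\end{itemize}
For the second kind, I would track the \(\theta\)-frequency of each normal jet of each invariant monomial, using \((a-b)+q(c-d)\equiv0\). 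The goal is to show that any product reaching frequency \(p\) has \(\lambda_{k_1}+\lambda_{k_2}-\lambda_2>\lambda_p\). When \(1<q<p/2\) this is a genuine inequality on \(k(k+2)\) under the congruence constraint, and this is where the real work lies.

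\textbf{Step 5: conclusion and genericity.} Granting Step 4,
\[
g_t=\text{const}+e^{-(\lambda_p-\lambda_2)t}\bigl(\operatorname{Re}(A\,e^{ip\theta})+o(1)\bigr)
\]
in \(C^2\), where \(A\) is the \(\alpha^p\)-coefficient of \(f_p\). If \(A\neq0\), then \(g_t\) has exactly one nondegenerate maximum and one nondegenerate minimum on the circle of length \(2\pi/p\). The same holds on \(C_\beta\) if its coefficient \(B\), the \(\beta^p\)-coefficient of \(f_p\), is nonzero. Together with the index count from Step 3, this gives four critical points of indices \(0,1,2,3\).

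Define \(\mathcal U_{p,q}\) by three conditions: \(f_2\) lies in the open dense generic set of Step 2, \(A\neq0\), and \(B\neq0\). Each is a nonvanishing condition on a continuous linear functional, or on a polynomial in finitely many such functionals, so \(\mathcal U_{p,q}\) is open and dense. The case \(q=1\) with \(p\ge3\) also carries the rotation of Step 2, which is chosen continuously in \(f\), and this preserves openness. The threshold \(T_f\) comes from the error bounds, which are locally uniform in \(f\).
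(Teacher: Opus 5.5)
Your treatment of $p=2$ and of $q=1$ is essentially the paper's. For $q=1$, every invariant mode of degree $<p$ is Hopf-basic, so no product of modes carries fiber frequency $p$ below weight $\lambda_p-\lambda_2$. The displacement of the critical fibers by the basic modes of degree $3,\dots,p-1$ is absorbed by the reduction.

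The gap is in the main case $1<q<p/2$. Step~4 is not merely left open: the inequality you set as its goal is false. As a result, your asymptotic form of $g_t$ and your genericity conditions $A\neq0$, $B\neq0$ are wrong there.

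Near $C_\alpha$, the invariant harmonics $\bar\alpha^{q}\beta\in E_{q+1}$ and $\alpha^{p-q}\beta\in E_{p-q+1}$ are, to leading order, $e^{-iq\theta}z$ and $e^{i(p-q)\theta}z$. Consider a perturbation $2\operatorname{Re}(w z)$ of $\rho=1-2|z|^2$ with $w=\varepsilon_1c_1e^{-iq\theta}+\varepsilon_2c_2e^{i(p-q)\theta}$. The normal graph is $z=\bar w/2$, and the reduced function gains $|w|^2/2$. Its cross term $\varepsilon_1\varepsilon_2\operatorname{Re}(\bar c_1c_2e^{ip\theta})$ has heat weight
\[
\delta_{q+1}+\delta_{p-q+1}=\delta_p-2(q-1)(p-q-1)<\delta_p .
\]
Equivalently, $\lambda_{q+1}+\lambda_{p-q+1}-\lambda_2<\lambda_p$ for every admissible $(p,q)$.

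Higher-order trees are cheaper still, and your Step~4 only considers quadratic ones. On $L(5,2)$, write $\varepsilon\,2\operatorname{Re}(c_1\bar\alpha^2\beta+c_3\alpha\beta^2)$ for the $E_3$-part. Evaluating $\alpha\beta^2$ on the normal graph $z\approx\tfrac12\varepsilon\bar c_1e^{2i\theta}$ gives the reduced term $\tfrac12\varepsilon^3\operatorname{Re}(c_3\bar c_1^{\,2}e^{5i\theta})$. Its weight is $3\delta_3=21$, against $\delta_5=27$ for $\alpha^5$.

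So the $e^{ip\theta}$-coefficient of $g_t$ is not $\sim Ae^{-(\lambda_p-\lambda_2)t}$; the $\alpha^p$-term is subleading. For the same reason, arguing that frequencies $mp$, $m\ge2$, are subordinate because $\lambda_{mp}>\lambda_p$ is insufficient. Nonlinear trees produce $e^{imp\theta}$ far below degree $mp$, and they must be compared with the true, lower, weight of frequency $p$.

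What is missing is the resonant bookkeeping the paper carries out. Near $C_\alpha$, frequency $mp$ is reached by a root harmonic $\alpha^{\ell_+}\bar\alpha^{\ell_-}\beta^{d_+}\bar\beta^{d_-}$ with $\ell=mp-qd$. This root is fed $|d|$ copies of the leading normal graph, each of weight $\delta_{q+1}$. The paper takes the resulting minimal weight to be
\[
\Omega_{\alpha,m}=\min_{d\in\mathbb Z}\bigl[\delta_{|mp-qd|+|d|}+|d|\,\delta_{q+1}\bigr].
\]
Near $C_\beta$ the same formula holds with $q$ replaced by $\tau_\beta$, where $q\tau_\beta\equiv1\pmod p$, and $\delta_{q+1}$ replaced by $\delta_{|\tau_\beta|+1}$.

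With these weights you then need three further ingredients:
\begin{enumerate}
\item[(i)] the arithmetic lemma $\Omega(1)<\Omega(m)$ for all $m\ge2$, which the paper proves from $\delta_{mD}>m\delta_D$ and a comparison of the real relaxation $H_a$ with the value at $d=\lfloor p/a\rfloor$;
\item[(ii)] the resonant coefficient $\mathfrak A_\alpha(f)$, defined as the total $e^{ip\theta}$-coefficient at weight $\Omega_{\alpha,1}$, which is a polynomial in finitely many normalized spectral coefficients;
\item[(iii)] a proof that this polynomial is not identically zero, which the paper gives by an envelope first-variation argument.
\end{enumerate}
The correct genericity conditions are then $\mathfrak A_\alpha(f)\neq0$, $\mathfrak A_\beta(f)\neq0$, and nonvanishing of the two normal-linear coefficients. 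The nonvanishing of the $\alpha^p$- and $\beta^p$-coefficients of $f_p$ is not the right condition.
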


The number four is the minimum possible for a Morse function on a nontrivial
lens space. We recall the standard argument, using basic homology and duality
for lens spaces; see, for example, \cite{Bredon,Rolfsen}. Choose a prime
\(\ell\mid p\). Then
\[
H_1(L(p,q);\mathbb F_\ell)\cong \mathbb F_\ell,
\qquad
H_2(L(p,q);\mathbb F_\ell)\cong \mathbb F_\ell,
\]
the second isomorphism following from Poincar\'e duality. Hence the Morse
inequalities with \(\mathbb F_\ell\)-coefficients give
\[
c_1\geq1,
\qquad
c_2\geq1.
\]
Since the manifold is closed and connected, every Morse function also has
\[
c_0\geq1,
\qquad
c_3\geq1.
\]
Thus every Morse function on \(L(p,q)\), \(p\ge2\), has at least four critical
points. Conversely, lens spaces admit genus-one Heegaard splittings, and hence
admit Morse functions with one critical point in each index. Thus four is the
Morse-minimal number.

The proof splits naturally into three cases.

For \(L(2,1)=\mathbb{RP}^3\), the first nonconstant eigenspace consists of
trace-free quadratic forms on
\(S^3\subset\mathbb R^4\). A generic such quadratic form has four critical
directions, giving four critical points on \(\mathbb{RP}^3\). Since the leading
term is already Morse, the conclusion follows by stability of Morse functions.

For \(L(p,1)\), \(p\geq3\), the first nonconstant eigenfunctions are pullbacks
of height functions on
\(S^2\) under the Hopf fibration. The leading term is therefore Morse--Bott,
with two critical Hopf circles. The first non-basic heat mode has vertical
frequency \(p\), and generically it breaks each critical circle into two
critical points on the quotient.

The most subtle case is \(p\geq3\), \(1<q<p/2\). Here the first nonconstant
eigenspace is one-dimensional, generated by
\[
\rho(\alpha,\beta)=|\alpha|^2-|\beta|^2.
\]
The function \(\rho\) is Morse--Bott, with two critical circles
\[
C_\alpha=\{\beta=0\},
\qquad
C_\beta=\{\alpha=0\}.
\]
The higher spectral components of the heat flow break these critical circles.
To understand the breaking, we perform Morse--Bott reduction near each circle,
obtaining a reduced one-dimensional function. The dominant reduced Fourier mode
is not obvious from the spectrum alone; it is determined by a resonance between
the spectral decay and the normal graph produced by the reduction. We introduce
heat weights
\[
\Omega_{\alpha,m},
\qquad
\Omega_{\beta,m},
\]
for the possible reduced frequencies \(mp\), and prove an arithmetic dominance
estimate:
\[
\Omega_{\alpha,1}<\Omega_{\alpha,m},
\qquad
\Omega_{\beta,1}<\Omega_{\beta,m},
\qquad
m\geq2.
\]
Thus the first reduced mode is the fundamental mode \(p\). Generic
nonvanishing of the associated resonant coefficients \(\mathfrak A_\alpha(f)\)
and \(\mathfrak A_\beta(f)\) then implies that each reduced function has the form
\[
c+2\operatorname{Re}(Ae^{ip\theta})+\text{lower-order terms},
\qquad
A\neq0.
\]
A Fourier-counting lemma gives \(2p\) critical points on each covering circle,
hence two critical points on each circle after quotienting by the order \(p\)
action. The two Morse--Bott circles therefore contribute four critical points in
total.

The next section gives a concise proof outline before the formal argument begins.

\section{Proof outline}

We recall the mechanism only once.  The heat expansion is
\[
u(t)=f_0+\sum_{k\ge1}e^{-\lambda_k t}f_k,
\qquad \lambda_k=k(k+2).
\]
After subtracting the constant term and rescaling by the first nonzero
eigenvalue, the heat flow converges in every \(C^r\)-norm to its leading
spectral component.  If this leading component is already Morse, stability of
Morse functions proves the result.  If the leading component is Morse--Bott,
we reduce the critical point equation near each critical circle to a
one-dimensional critical point problem on the circle.

From the experimental point of view, the key issue is not the spectral
convergence itself, but the geometry of the spectral component and perturbation
selected by that convergence.  In the lens spaces considered here, the leading
term is either Morse or Morse--Bott.  The Morse--Bott cases are where the
experimental evidence was most informative: it predicted the quotient frequency
that should appear after reduction to each critical circle, and the proof then
turns this prediction into the dominance estimates below.

There are three cases.  For \(L(2,1)=\mathbb{RP}^3\), the first nonconstant
component is a generic trace-free quadratic form on \(S^3\), hence descends to
a Morse function with four critical points.  For \(L(p,1)\), \(p\ge3\), the
low modes below degree \(p\) are Hopf-basic; the degree \(p\) component is the
first vertical oscillation and breaks each critical Hopf fiber into two points
on the quotient.  For \(1<q<p/2\), the leading component is
\[
\rho(\alpha,\beta)=|\alpha|^2-|\beta|^2,
\]
whose critical set is \(C_\alpha\cup C_\beta\).  The reduced circle
functions have only frequencies \(p,2p,3p,\ldots\).  The core arithmetic lemma
shows that the heat weight of the fundamental reduced frequency \(p\) is
strictly smaller than the heat weights of all higher multiples.  Generic
nonvanishing of the first resonant coefficient then reduces the problem to the
elementary fact that
\[
c+2\operatorname{Re}(Ae^{ip\theta})+o_{C^2}(|A|),
\qquad A\ne0,
\]
has exactly \(2p\) nondegenerate critical points on the covering circle, hence
two critical points after quotienting by the order-\(p\) action.

\section{Spectral setup and preliminary results}

Throughout, write
\[
L(p,q)=S^3/\Gamma,\qquad
\Gamma=\langle\gamma\rangle,\qquad
\gamma(\alpha,\beta)=(\zeta\alpha,\zeta^q\beta),\qquad
\zeta=e^{2\pi i/p},
\]
where
\[
p\ge 2,\qquad 1\le q\le p/2,\qquad (p,q)=1.
\]
Let
\[
S^3=\{(\alpha,\beta)\in\mathbb C^2:|\alpha|^2+|\beta|^2=1\}.
\]
For \(k\ge0\), let \(E_k\) denote the real eigenspace on \(L(p,q)\) coming from
\(\Gamma\)-invariant spherical harmonics of degree \(k\) on \(S^3\). Its positive spectral parameter is
\[
\lambda_k=k(k+2),
\]
meaning that
\[
\Delta\phi=-\lambda_k\phi
\qquad
\text{for }\phi\in E_k.
\]
In particular, the heat factor associated to \(E_k\) is \(e^{-\lambda_k t}\).

\begin{lemma}[Spectral tails and heat-weight estimates]
Let
\[
M=L(p,q)
\]
with the round quotient metric, and write the spectral expansion of
\[
f\in L^2(M,\mathbb R)
\]
as
\[
f=f_0+\sum_{k\geq1}f_k,
\qquad
f_k\in E_k.
\]
Let
\[
u(t)=e^{t\Delta}f
=
f_0+\sum_{k\geq1}e^{-\lambda_k t}f_k.
\]

Fix
\[
r\geq0.
\]
If
\[
K\geq1,
\]
then the spectral tail
\[
R_{\geq K}(t)=\sum_{k\geq K}e^{-\lambda_k t}f_k
\]
satisfies
\[
\|R_{\geq K}(t)\|_{C^r}
=
O(e^{-\lambda_K t})
\qquad
\text{as }t\to\infty.
\]
Equivalently, after normalizing by the degree \(2\) contribution, the tail
\[
\widetilde R_{\geq K}(t)
=
\sum_{k\geq K}e^{-(\lambda_k-\lambda_2)t}f_k
\]
satisfies
\[
\|\widetilde R_{\geq K}(t)\|_{C^r}
=
O(e^{-\delta_K t}),
\qquad
\delta_K=\lambda_K-\lambda_2.
\]

In particular, if
\[
\sigma<\delta_K,
\]
then
\[
\|\widetilde R_{\geq K}(t)\|_{C^r}
=
o(e^{-\sigma t}).
\]
More generally, whenever all spectral components occurring in a remainder have
heat weights at least
\[
\sigma+\eta
\]
for some
\[
\eta>0,
\]
the remainder is
\[
o(e^{-\sigma t})
\]
in every \(C^r\)-norm.
\end{lemma}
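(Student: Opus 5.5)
The plan is to reduce all four assertions to a single $C^r$ bound on one spectral component $f_k$ in terms of its $L^2$ norm. The fixed polynomial growth of that bound in $k$ is then absorbed by the Gaussian decay $e^{-\lambda_k t}$.

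First, for each $f_k\in E_k$, I would establish
\[
\|f_k\|_{C^r}\le C_r\,(1+\lambda_k)^{s}\,\|f_k\|_{L^2},\qquad s=\tfrac{r}{2}+1,
\]
with $C_r$ independent of $k$. The route is elliptic regularity plus Sobolev embedding on the compact $3$-manifold $M$. For $2m>r+3/2$ we have $\|\phi\|_{C^r}\le C\|\phi\|_{H^{2m}}\le C'\|(1-\Delta)^m\phi\|_{L^2}$, and for an eigenfunction the right-hand side equals $C'(1+\lambda_k)^m\|\phi\|_{L^2}$. Any fixed exponent works; the precise value of $s$ is irrelevant. Alternatively, one can pull back to $S^3$ and write $f_k$ as the restriction of a harmonic homogeneous polynomial of degree $k$. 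The $\Gamma$-invariance costs nothing, since the quotient is a local isometry and $C^r$ norms on $M$ are computed on $S^3$. Since $\|f_k\|_{L^2}\le\|f\|_{L^2}$, this gives $\|f_k\|_{C^r}\le C_r\|f\|_{L^2}(1+\lambda_k)^s$.

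Next, for $t\ge1$ and $k\ge K$, I would bound $e^{-\lambda_k t}=e^{-\lambda_K t}e^{-(\lambda_k-\lambda_K)t}\le e^{-\lambda_K t}e^{-(\lambda_k-\lambda_K)}$. This yields
\[
\|R_{\ge K}(t)\|_{C^r}\le e^{-\lambda_K t}\,C_r\|f\|_{L^2}\sum_{k\ge K}(1+\lambda_k)^s e^{-(\lambda_k-\lambda_K)},
\]
and the series converges because $\lambda_k=k(k+2)$ grows quadratically. This proves the first claim, and it also shows that the series converges in $C^r$ for $t>0$, so $R_{\ge K}(t)$ is a genuine smooth function. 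Multiplying by $e^{\lambda_2 t}$ gives the normalized statement with $\delta_K=\lambda_K-\lambda_2$. The bound $O(e^{-\delta_K t})=o(e^{-\sigma t})$ for $\sigma<\delta_K$ is then immediate.

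For the general heat-weight statement, I would interpret it as follows: a remainder is a (possibly infinite) sum of terms $e^{-w_j t}g_j$ with weights $w_j\ge\sigma+\eta$. The obvious application is to spectral components $g_j=f_{k_j}$ with $w_j=\lambda_{k_j}-\lambda_2$. There may also be finitely many products of such components, as arise later from the normal-graph reduction. I would repeat the estimate with $e^{-w_j t}\le e^{-(\sigma+\eta)t}e^{-(w_j-\sigma-\eta)}$ for $t\ge1$. Summability then uses that $w_j$ grows at least linearly in $\lambda_{k_j}$, and for products the $C^r$ algebra property $\|gh\|_{C^r}\le C\|g\|_{C^r}\|h\|_{C^r}$. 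The result is $O(e^{-(\sigma+\eta)t})=o(e^{-\sigma t})$.

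The only step requiring care is making the per-eigenfunction constant uniform in $k$, with at most polynomial growth in $\lambda_k$. Everything else is summation of a rapidly convergent series. I would rely on the elliptic-regularity route, since it avoids any explicit harmonic analysis on $L(p,q)$.
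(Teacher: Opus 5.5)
Your proof is correct and follows essentially the same route as the paper: Sobolev embedding $H^s\subset C^r$ for $s>r+3/2$, with the polynomial factor $(1+\lambda_k)^s$ absorbed by the exponential heat decay. The only difference is bookkeeping. You bound each $\|f_k\|_{C^r}$ separately and sum by the triangle inequality, using $t\ge1$ and the convergence of $\sum_k(1+\lambda_k)^s e^{-(\lambda_k-\lambda_K)}$. The paper instead applies the embedding to the whole tail, uses Parseval in $H^s$, and uses that $\lambda\mapsto(1+\lambda)^s e^{-2\lambda t}$ is decreasing on $[\lambda_K,\infty)$ for large $t$.
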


\begin{proof}
Choose
\[
s>r+\frac{\dim M}{2}.
\]
By Sobolev embedding,
\[
\|h\|_{C^r}\leq C\|h\|_{H^s}.
\]
Therefore
\[
\|R_{\geq K}(t)\|_{C^r}
\leq
C\|R_{\geq K}(t)\|_{H^s}.
\]
Using the spectral decomposition,
\[
\|R_{\geq K}(t)\|_{H^s}^2
\leq
C_s
\sum_{k\geq K}
(1+\lambda_k)^s e^{-2\lambda_k t}\|f_k\|_{L^2}^2.
\]

For \(t\) sufficiently large, the function
\[
\lambda\mapsto (1+\lambda)^s e^{-2\lambda t}
\]
is decreasing on the interval
\[
[\lambda_K,\infty).
\]
Indeed, its logarithmic derivative is
\[
\frac{s}{1+\lambda}-2t,
\]
which is negative for all
\[
\lambda\geq \lambda_K
\]
once \(t\) is large enough. Hence, for all \(k\geq K\),
\[
(1+\lambda_k)^s e^{-2\lambda_k t}
\leq
(1+\lambda_K)^s e^{-2\lambda_K t}.
\]
It follows that
\[
\|R_{\geq K}(t)\|_{H^s}^2
\leq
C_{s,K} e^{-2\lambda_K t}
\sum_{k\geq K}\|f_k\|_{L^2}^2
\leq
C_{s,K,f} e^{-2\lambda_K t}.
\]
Therefore
\[
\|R_{\geq K}(t)\|_{C^r}
=
O(e^{-\lambda_K t}).
\]

Multiplying by
\[
e^{\lambda_2t}
\]
gives the normalized estimate with
\[
\delta_K=\lambda_K-\lambda_2.
\]
The final assertion follows by applying the same estimate to the first spectral
degree whose heat weight is at least \(\sigma+\eta\).
\end{proof}

We shall use the following elementary convention. A phase weight \((m,n)\)
means that, under
\[
(\alpha,\beta)\mapsto(e^{i\phi_1}\alpha,e^{i\phi_2}\beta),
\]
the corresponding term transforms by the factor
\[
e^{i(m\phi_1+n\phi_2)}.
\]
Such a term is \(\Gamma\)-invariant precisely when
\[
m+qn\equiv0\pmod p.
\]

\subsection[Invariant spherical harmonics on S3]{Invariant spherical harmonics on \(S^3\)}

We shall use the following standard model for the eigenspaces.  Let
\(\mathcal P_k\) be the space of homogeneous polynomials of total degree
\(k\) in
\[
\alpha,
\overline\alpha,
\beta,
\overline\beta .
\]
The degree \(k\) spherical harmonics on \(S^3\) are the restrictions to
\(S^3\) of the harmonic elements of \(\mathcal P_k\).  Equivalently,
\(E_k\) is the \(\Gamma\)-invariant real part of this space.  If a monomial
has the form
\[
\alpha^{a_1}\overline\alpha^{a_2}
\beta^{b_1}\overline\beta^{b_2},
\]
then its phase weight is
\[
(a_1-a_2,\, b_1-b_2),
\]
and it is \(\Gamma\)-invariant precisely when
\[
(a_1-a_2)+q(b_1-b_2)\equiv0\pmod p.
\]
Moreover, any monomial in which each complex variable appears only
holomorphically or only anti-holomorphically is harmonic.  Indeed,
\[
\Delta_{\mathbb C^2}=4\left(
\frac{\partial^2}{\partial\alpha\partial\overline\alpha}
+
\frac{\partial^2}{\partial\beta\partial\overline\beta}
\right),
\]
and each mixed derivative above annihilates such a monomial.  This observation
will be used repeatedly to show that the minimal degrees appearing in the
weight computations are actually attained by genuine spherical harmonics, not
merely by formal monomials.

For later use, we record the basic Morse--Bott model that appears in the
non-Hopf case. Define
\[
\rho:S^3\to\mathbb R,
\qquad
\rho(\alpha,\beta)=|\alpha|^2-|\beta|^2.
\]
Its critical set is the union of the two Hopf circles
\[
C_\alpha=\{\beta=0\},
\qquad
C_\beta=\{\alpha=0\}.
\]
In coordinates near \(C_\alpha\),
\[
\alpha=\sqrt{1-|z|^2}e^{i\theta},
\qquad
\beta=z,
\]
one has
\[
\rho=1-2|z|^2.
\]
Thus \(C_\alpha\) has normal Morse index \(2\). In coordinates near
\(C_\beta\),
\[
\alpha=z,
\qquad
\beta=\sqrt{1-|z|^2}e^{i\theta},
\]
one has
\[
\rho=-1+2|z|^2,
\]
so \(C_\beta\) has normal Morse index \(0\).

\subsection{Morse--Bott reduction and Fourier counting}

\begin{lemma}[Normal reduction near the two critical circles]
Let \(F:S^3\to\mathbb R\) be smooth. Let \(U_\alpha,U_\beta\) be disjoint
tubular neighborhoods of \(C_\alpha,C_\beta\), with parametrizations

\[
\Phi_\alpha:S^1\times D^2\to U_\alpha,\qquad
\Phi_\beta:S^1\times D^2\to U_\beta,
\]
such that
\[
\Phi_\alpha(\theta,0)=(e^{i\theta},0),\qquad
\Phi_\beta(\theta,0)=(0,e^{i\theta}).
\]
There exist \(\delta>0\) and \(r>0\) such that if
\[
\|F-\rho\|_{C^2}<\delta,
\]
then there are unique smooth maps
\[
Y_\alpha,Y_\beta:S^1\to D^2
\]
with
\[
\|Y_\alpha\|_{C^1}<r,\qquad \|Y_\beta\|_{C^1}<r,
\]
such that the normal component of \(\nabla F\) vanishes along
\[
\theta\mapsto \Phi_\alpha(\theta,Y_\alpha(\theta)),
\qquad
\theta\mapsto \Phi_\beta(\theta,Y_\beta(\theta)).
\]
Define
\[
R_\alpha(\theta)=F(\Phi_\alpha(\theta,Y_\alpha(\theta))),
\qquad
R_\beta(\theta)=F(\Phi_\beta(\theta,Y_\beta(\theta))).
\]
Then the critical points of \(F\) in \(U_\alpha\) are exactly
\[
\Phi_\alpha(\theta,Y_\alpha(\theta))
\quad\text{with}\quad R_\alpha'(\theta)=0,
\]
and similarly near \(C_\beta\).
\end{lemma}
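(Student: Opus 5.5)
The plan is to obtain $Y_\alpha$ and $Y_\beta$ from the implicit function theorem, applied in a Banach space of maps $S^1\to\mathbb R^2$. I treat $C_\alpha$; the argument for $C_\beta$ is identical. Let $(\theta,z)$ be the coordinates given by $\Phi_\alpha$. Rather than the Riemannian normal gradient, I will use the partial gradient $\partial_z(F\circ\Phi_\alpha)(\theta,z)\in\mathbb R^2$; its vanishing is the natural meaning of ``normal component of $\nabla F$ vanishes'' in these coordinates. If one insists on the metric normal projection, the same argument applies with a smooth invertible matrix factor absorbed.

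Set $G(F,\theta,z)=\partial_z(F\circ\Phi_\alpha)(\theta,z)$. For $F=\rho$, the Morse--Bott structure from the previous subsection gives $G(\rho,\theta,0)=0$. The normal Hessian $H(\theta)=\partial_z^2(\rho\circ\Phi_\alpha)(\theta,0)$ is nondegenerate: in the model coordinates it equals $-4I$, and for a general tubular parametrization it is conjugate to this by the derivative of the change of normal coordinates. By compactness of $S^1$, $H^{-1}$ is uniformly bounded.

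\textbf{Existence.} I will apply the implicit function theorem pointwise in $\theta$, with uniform constants; this avoids infinite-dimensional function spaces. For each fixed $\theta$, the map $z\mapsto G(F,\theta,z)$ is $C^1$-close to $z\mapsto G(\rho,\theta,z)$, uniformly in $\theta$, whenever $\|F-\rho\|_{C^2}<\delta$. The key estimate is a quantitative inverse function theorem. Its input is that $G(\rho,\theta,\cdot)$ has derivative $H(\theta)$ at $0$, with $D_zG(\rho)$ Lipschitz in $z$ uniformly in $\theta$. The conclusion is that there are $r_0>0$ and $\delta>0$ such that, for $\|F-\rho\|_{C^2}<\delta$, each $G(F,\theta,\cdot)$ has exactly one zero $Y_\alpha(\theta)$ in the disk $|z|<r_0$, and $|Y_\alpha(\theta)|\le C\delta$. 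A contraction-mapping argument on $z\mapsto z-H(\theta)^{-1}G(F,\theta,z)$ gives both existence and uniqueness.

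\textbf{Smoothness and the $C^1$ bound.} Since $F$ is smooth, $G(F,\cdot,\cdot)$ is smooth and $D_zG(F)$ is invertible near the zero. The finite-dimensional implicit function theorem therefore makes $Y_\alpha$ smooth in $\theta$, with
\[
Y_\alpha'=-(D_zG)^{-1}\partial_\theta G .
\]
Now $\partial_\theta G(\rho,\theta,0)=0$ and $\|G(F)-G(\rho)\|_{C^1}\lesssim\|F-\rho\|_{C^2}$. Together with $|Y_\alpha|\le C\delta$, this gives $\|Y_\alpha\|_{C^1}\le C'\delta<r$ after shrinking $\delta$. Uniqueness among maps with $\|Y\|_{C^1}<r$ follows from pointwise uniqueness in $|z|<r_0$, provided $r\le r_0$.

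\textbf{Critical points.} A point $(\theta,z)$ with $|z|<r_0$ is critical for $F$ exactly when $G=0$ and $\partial_\theta(F\circ\Phi_\alpha)=0$. The condition $G=0$ forces $z=Y_\alpha(\theta)$. The chain rule then gives
\[
R_\alpha'(\theta)=\partial_\theta(F\circ\Phi_\alpha)+\langle G,Y_\alpha'\rangle=\partial_\theta(F\circ\Phi_\alpha)
\]
along the graph, so the two conditions together are equivalent to $R_\alpha'(\theta)=0$.

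To cover all of $U_\alpha$ and not just the thin tube $|z|<r_0$, I shrink the tubular neighborhood to $\Phi_\alpha(S^1\times\{|z|<r_0\})$. This is permitted because the statement only fixes disjoint tubes, and one may choose them small. Alternatively, note that on the compact set $\{r_0\le|z|\le1\}$ the gradient $|\nabla\rho|$ is bounded below, so $|\nabla F|>0$ there once $\delta$ is small. This second observation is the cleanest route: it needs no modification of the tubes, because $\rho$ has no critical points in $\overline{U_\alpha}\setminus C_\alpha$.

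The main obstacle is this uniformity bookkeeping: making $\delta$ and $r_0$ independent of $\theta$, and excluding critical points far from the circle. Compactness of $S^1$ and of the closed tubes handles both.
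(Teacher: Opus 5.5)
Your proposal is correct and follows essentially the same route as the paper. Both apply the implicit function theorem with parameter $\theta$ to the normal equation $\partial_z(F\circ\Phi_\alpha)=0$, using the uniformly invertible normal Hessian of $\rho$, and then use the chain-rule identity $R_\alpha'(\theta)=\partial_\theta(F\circ\Phi_\alpha)(\theta,Y_\alpha(\theta))$ to identify the critical points. Your lower bound on $|\nabla\rho|$ over the part of $\overline{U_\alpha}$ outside the thin tube $\{|z|<r_0\}$ also makes explicit a step the paper leaves implicit: a critical point anywhere in $U_\alpha$ must lie in the region where the implicit-function solution is unique.
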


\begin{proof}
We prove the assertion near \(C_\alpha\). The proof near \(C_\beta\) is identical.
In the coordinates
\[
\alpha=\sqrt{1-|z|^2}e^{i\theta},\qquad \beta=z,
\]
one has
\[
\rho=1-2|z|^2.
\]
Hence the normal Hessian along \(C_\alpha\) is nondegenerate. For
\[
\widetilde F_\alpha(\theta,y)=F(\Phi_\alpha(\theta,y)),
\]
the normal equation is
\[
\partial_y\widetilde F_\alpha(\theta,y)=0.
\]
For \(F=\rho\), the solution is \(y=0\), and the derivative with respect to \(y\)
is uniformly invertible. The implicit function theorem with parameter \(\theta\)
gives the unique small solution \(y=Y_\alpha(\theta)\).

If \(x=\Phi_\alpha(\theta,y)\) is critical, then the normal equation holds, so
\(y=Y_\alpha(\theta)\), and differentiating the reduced function gives
\(R_\alpha'(\theta)=0\). Conversely, if \(R_\alpha'(\theta)=0\), the normal
component vanishes by construction and the tangential component vanishes by the
definition of \(R_\alpha\). Hence \(dF_x=0\).
\end{proof}

\begin{lemma}[Fourier counting lemma]
Let \(p\ge2\). There exists \(c_p>0\) such that if
\[
R(\theta)=c+2\operatorname{Re}(Ae^{ip\theta})+S(\theta),
\qquad A\ne0,
\]
and
\[
\|S\|_{C^2}<c_p|A|,
\]
then \(R\) has exactly \(2p\) critical points on \(S^1\), and all are
nondegenerate.
\end{lemma}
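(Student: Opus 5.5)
We normalize the leading term and reduce to a rigid trigonometric model. Write \(A=|A|e^{i\varphi}\). Then
\[
2\operatorname{Re}(Ae^{ip\theta})=2|A|\cos(p\theta+\varphi),
\]
so
\[
R'(\theta)=-2p|A|\sin(p\theta+\varphi)+S'(\theta),
\qquad
R''(\theta)=-2p^2|A|\cos(p\theta+\varphi)+S''(\theta).
\]
Dividing by \(2p|A|\), the critical points of \(R\) are the zeros of
\[
g(\theta)=-\sin(p\theta+\varphi)+\varepsilon(\theta),
\qquad
\varepsilon=\frac{S'}{2p|A|}.
\]
The hypothesis gives
\[
\|\varepsilon\|_{C^1}\le \frac{\|S\|_{C^2}}{2p|A|}<\frac{c_p}{2p}.
\]
The model \(g_0=-\sin(p\theta+\varphi)\) has exactly \(2p\) simple zeros on \(S^1\), namely \(\theta_j=(j\pi-\varphi)/p\) for \(j=0,\dots,2p-1\). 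So the task is a uniform stability count for zeros of \(g_0\) under small \(C^1\) perturbations. Uniformity in \(\varphi\) is automatic, since rotation of \(\theta\) removes \(\varphi\).

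Next, split \(S^1\) into two regions using the fixed threshold \(1/2\):
\[
I=\{\theta:|\sin(p\theta+\varphi)|\ge 1/2\},
\qquad
J=\{\theta:|\cos(p\theta+\varphi)|\ge \sqrt3/2\}.
\]
Since \(\sin^2+\cos^2=1\), these sets cover \(S^1\).

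On \(I\), \(|g|\ge 1/2-\|\varepsilon\|_{C^0}>0\) once \(c_p/(2p)<1/2\). Hence there are no zeros of \(g\) in \(I\).

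The set \(J\) is a disjoint union of \(2p\) arcs, each of length \(\pi/(3p)\), centered at the points \(\theta_j\). On each arc:
\begin{itemize}
\item \(g_0'=-p\cos(p\theta+\varphi)\) has constant sign and \(|g_0'|\ge p\sqrt3/2\).
\item Consequently \(|g'|\ge p\sqrt3/2-\|\varepsilon'\|_{C^0}>0\) once \(c_p/(2p)<p\sqrt3/2\). So \(g\) is strictly monotone on the arc and has at most one zero there.
\item At the endpoints, \(|g_0|=1/2\) with opposite signs, and \(|\varepsilon|<1/2\). So \(g\) keeps opposite endpoint signs, and the intermediate value theorem gives exactly one zero.
\end{itemize}
This yields exactly \(2p\) critical points of \(R\).

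Nondegeneracy follows from the same estimates. At a zero \(\theta^*\in J\), we have \(|\cos(p\theta^*+\varphi)|\ge\sqrt3/2\). Therefore
\[
|R''(\theta^*)|\ge 2p^2|A|\tfrac{\sqrt3}{2}-\|S\|_{C^2}>|A|(\sqrt3p^2-c_p)>0
\]
for \(c_p<\sqrt3p^2\). Equivalently, \(R''=2p|A|g'\) and \(g'\neq0\) there. Taking, say, \(c_p=\min(p/2,\,p^2)\) satisfies all three constraints simultaneously.

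I expect no real obstacle. The only point requiring care is the \(C^0\) control of \(\varepsilon\), which comes from \(S'\) and hence from the \(C^1\subset C^2\) norm. The main work is choosing the covering \(I\cup J\) so that on each piece either \(|g|\) or \(|g'|\) is bounded below by a constant depending only on \(p\), uniformly in \(\varphi\).
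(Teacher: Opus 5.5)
Your proof is correct and takes essentially the same route as the paper. Both split the circle into $2p$ short arcs around the model critical points $\theta_\ell=(\ell\pi-\varphi)/p$, where the second derivative of the leading cosine dominates $S''$, and the complement, where its first derivative dominates $S'$; the paper uses arcs of half-width $\delta/p$ for an arbitrary $0<\delta<\pi/4$, while you fix $\delta=\pi/6$ and also spell out the intermediate-value step giving existence of a zero in each arc, which the paper leaves implicit.
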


\begin{proof}
Write \(A=|A|e^{i\varphi}\). The unperturbed part
\[
R_0(\theta)=2|A|\cos(p\theta+\varphi)
\]
has critical points
\[
\theta_\ell=\frac{\ell\pi-\varphi}{p},\qquad \ell=0,\dots,2p-1,
\]
all nondegenerate. Choose \(0<\delta<\pi/4\) and intervals
\[
I_\ell=\left(\theta_\ell-\frac{\delta}{p},\theta_\ell+\frac{\delta}{p}\right).
\]
Outside their union,
\[
|R_0'|\ge 2p|A|\sin\delta.
\]
Inside each \(I_\ell\), the second derivative \(R_0''\) has constant sign and
\[
|R_0''|\ge 2p^2|A|\cos\delta.
\]
Choose
\[
c_p<\min\{p\sin\delta,p^2\cos\delta\}.
\]
Then \(R'=R_0'+S'\) has no zeros outside the \(I_\ell\)'s and exactly one zero
inside each \(I_\ell\), and at each such zero \(R''\ne0\).
\end{proof}

\begin{lemma}[Global localization of critical points]
Let \(U_\alpha,U_\beta\) be disjoint neighborhoods of \(C_\alpha,C_\beta\).
There exists \(\delta>0\) such that if
\[
\|F-\rho\|_{C^1}<\delta,
\]
then every critical point of \(F\) lies in
\[
U_\alpha\cup U_\beta.
\]
\end{lemma}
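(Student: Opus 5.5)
The plan is a compactness argument on the complement of the two circles, using that \(d\rho\) vanishes only on \(C_\alpha\cup C_\beta\). Set
\[
K=S^3\setminus(U_\alpha\cup U_\beta).
\]
We may assume \(U_\alpha,U_\beta\) are open; otherwise we replace them by their interiors, which are still neighborhoods of the circles. Then \(K\) is a closed subset of the compact manifold \(S^3\), hence compact.

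First, I verify that \(\rho\) has no critical points on \(K\). The function \(\rho=|\alpha|^2-|\beta|^2\) is the restriction to \(S^3\) of a quadratic form. Its gradient on \(S^3\) is the tangential projection of the ambient gradient \(2(\alpha,-\beta)\). This projection vanishes exactly when \((\alpha,-\beta)\) is parallel to \((\alpha,\beta)\), which forces \(\alpha=0\) or \(\beta=0\). So the critical set is precisely \(C_\alpha\cup C_\beta\), as recorded earlier in the paper.

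Since \(K\) is disjoint from \(C_\alpha\cup C_\beta\), the continuous function \(x\mapsto|\nabla\rho(x)|\) is strictly positive on \(K\). By compactness it attains a positive minimum
\[
m=\min_{x\in K}|\nabla\rho(x)|>0.
\]
I fix the \(C^1\)-norm using the round metric, so that \(\sup_x|\nabla(F-\rho)(x)|\le\|F-\rho\|_{C^1}\). Any other fixed choice of norm on the compact manifold changes \(\delta\) only by a constant factor.

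Now take \(\delta=m\). If \(\|F-\rho\|_{C^1}<\delta\), then for every \(x\in K\),
\[
|\nabla F(x)|\ge|\nabla\rho(x)|-|\nabla(F-\rho)(x)|> m-m=0.
\]
So \(F\) has no critical points in \(K\), and every critical point of \(F\) lies in \(U_\alpha\cup U_\beta\).

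I expect no step here to be a real obstacle. The only points needing care are that \(K\) is compact and that the \(C^1\)-norm controls the gradient pointwise. The same argument works verbatim for functions on \(L(p,q)\), because \(\rho\) is \(\Gamma\)-invariant and the circles descend to the quotient, which is also compact.
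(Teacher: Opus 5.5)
Your proposal is correct and follows essentially the same approach as the paper: a positive lower bound $m$ for $|\nabla\rho|$ on the compact set $K=S^3\setminus(U_\alpha\cup U_\beta)$, followed by a pointwise $C^1$ perturbation estimate. The only differences are that you take $\delta=m$ where the paper takes $m/2$, and that you additionally verify the critical set of $\rho$ and the compactness of $K$, both of which the paper leaves implicit.
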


\begin{proof}
On the compact set \(K=S^3\setminus(U_\alpha\cup U_\beta)\), the gradient
\(\nabla\rho\) is nonzero. Hence \(|\nabla\rho|\ge m>0\) on \(K\). If
\(\|F-\rho\|_{C^1}<m/2\), then \(\nabla F\) cannot vanish on \(K\).
\end{proof}

\begin{remark}[Indices after Morse--Bott breaking]
A Morse--Bott critical circle of normal index \(\mu\) contributes indices
\[
\mu,\qquad \mu+1
\]
after reduction to a Morse function on the circle. Thus \(C_\beta\), of normal
index \(0\), contributes indices \(0,1\), while \(C_\alpha\), of normal index
\(2\), contributes indices \(2,3\). 
Thus, whenever the two reduced circle functions each contribute one minimum and
one maximum on the quotient, the resulting four critical points have indices
\[
0,\quad1,\quad2,\quad3.
\]
\end{remark}

\subsection[The case p >= 3, 1 < q < p/2]{The case \(p\ge3,\;1<q<p/2\)}

We now turn to the main case
\[
p\geq3,\qquad 1<q<p/2.
\]

In this range the first nonconstant eigenspace is one-dimensional:
\[
E_2=\mathbb R\rho,
\]
where \(\rho\) is the Morse--Bott function introduced above. Thus, after
subtracting the constant term and rescaling the heat flow, the leading term is
\(\rho\), whose critical set is
\[
C_\alpha\cup C_\beta.
\]
The purpose of this subsection is to understand how the higher heat modes break
these two critical circles.

The analysis has three steps. First, we perform Morse--Bott reduction near
\(C_\alpha\) and \(C_\beta\), reducing the critical point problem to a
one-dimensional Fourier problem on each circle. Second, we determine which
reduced Fourier mode is dominant as \(t\to\infty\). This is the arithmetic part
of the proof: although the reduced frequencies are \(p,2p,3p,\ldots\), the
fundamental frequency \(p\) has strictly smaller heat weight than all higher
multiples. Third, we show that the corresponding resonant coefficients are
generically nonzero. Once these three facts are established, the Fourier
counting lemma gives two critical points from each Morse--Bott circle on the
quotient lens space.

\begin{lemma}[Dominance of the fundamental reduced frequency]
Let
\[
p\geq 3,\qquad 1<a<p/2,\qquad (a,p)=1.
\]
For \(k\geq1\), write
\[
\delta_k=k(k+2)-8.
\]
For each integer \(m\geq1\), define
\[
\Omega_a(m)
=
\min_{d\in\mathbb Z}
\left[
\delta_{|mp-ad|+|d|}
+
|d|\delta_{a+1}
\right].
\]
Then
\[
\Omega_a(1)<\Omega_a(m)
\qquad
\text{for every }m\geq2.
\]
\end{lemma}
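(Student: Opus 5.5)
The plan is to compare \(\Omega_a(m)\), for \(m\ge2\), against one or two explicit competitors for \(\Omega_a(1)\), using only elementary inequalities on the index \(N_m(d)=|mp-ad|+|d|\) and the fact that \(k\mapsto\delta_k\) is strictly increasing for \(k\ge0\). For the upper bound on \(\Omega_a(1)\), the choice \(d=0\) gives \(\Omega_a(1)\le\delta_p\). The choice \(d=d_*:=\lfloor p/a\rfloor\ge 2\) gives \(N_1(d_*)=(p-ad_*)+d_*\le a-1+p/a\), and hence
\[
\Omega_a(1)\le \min\{\delta_p,\ \delta_{p-ad_*+d_*}+d_*\,\delta_{a+1}\}.
\]
Which of the two is smaller depends on the size of \(a\) relative to \(p\), so both will be kept. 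Since \(a\ge2\), we also record \(\delta_{a+1}=(a-1)(a+5)\ge 7>0\).

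For the lower bound on \(\Omega_a(m)\) with \(m\ge2\), I would fix \(d\) and split into cases.

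\emph{Case \(d=0\).} Then \(N=mp\ge 2p\), so the term is \(\delta_{mp}>\delta_p\ge\Omega_a(1)\).

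\emph{Case \(d\ne0\).} Write \(x=|d|\ge1\). Two inequalities drive everything. The triangle inequality gives \(N\ge mp-(a-1)x\). Multiplying \(N\) by \(a\) gives \(aN\ge|mp-ad|+a|d|\ge mp\), so \(N\ge mp/a\). If \(N\ge p\), then the term is \(\delta_N+x\delta_{a+1}>\delta_p\), and we are done. It remains to treat \(N<p\). In that range the first inequality forces
\[
x>\frac{(m-1)p}{a-1}\ \ge\ \frac{m\,x_1}{?}\quad\text{(schematically)},\qquad\text{and in particular}\quad x\ge \frac{mp-N}{a-1},
\]
so we obtain the real-variable lower bound
\[
\delta_N+x\delta_{a+1}\ \ge\ \psi_m(N):=\delta_N+(a+5)(mp-N),\qquad \frac{mp}{a}\le N<p.
\]
Here \(\psi_m\) is a convex quadratic in \(N\), and it is increasing in \(m\). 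It therefore suffices to handle \(m=2\) and to minimize \(\psi_2\) over the interval \([2p/a,p)\). The minimum sits either at the left endpoint or at the vertex \(N=(a+3)/2\). It then has to be compared with the competitor \(\delta_{p-ad_*+d_*}+d_*\delta_{a+1}\), whose own structure is the same: small \(\delta\)-index plus roughly \((p/a)(a-1)(a+5)\). The heuristic reason the comparison goes the right way is that the \(m=2\) configuration must pay the vertical cost \((a+5)(2p-N)\) on roughly twice the distance that the \(m=1\) configuration pays, while its \(\delta_N\)-gain is bounded by \(N\ge 2p/a\).

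The main obstacle is precisely the regime \(N<p\) when \(a\) is small relative to \(p\), for example \(a=2\) with \(p\) large. In that regime \(\delta_p\) is a useless competitor, and the lemma is a genuine comparison between two "resonant" choices of \(d\) rather than a triviality. I would first try a direct injection: to each \(d\) for frequency \(m\), assign \(d'=\lfloor d/m\rceil\) (nearest integer) for frequency \(1\), and verify that
\[
N_1(d')\le N_m(d)\qquad\text{and}\qquad |d'|<|d|,
\]
using \(|p-ad'|\le |mp-ad|/m+a/2\). The extra \(a/2\) here has to be absorbed by the loss \(|d|-|d'|\ge (m-1)|d|/m\), which costs \(\ge \delta_{a+1}/2\) per unit while changing \(\delta_N\) by at most \(O(Na)\). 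If this absorption fails in a few boundary configurations (small \(p\), \(|d|\le 2\)), I would treat those by explicit inspection, using \(a\ge2\), \((a,p)=1\) and \(a<p/2\) (for instance \(p=5,7,8,9\)). The coprimality is used exactly to exclude \(mp-ad=0\) with \(m=1\), which is what keeps the \(m=1\) optimum strictly below its \(m\ge2\) analogue.
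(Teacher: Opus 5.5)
Your reduction is correct as far as it goes, but the step that carries the content of the lemma is never proved. The cases $d=0$ and $N\ge p$ are disposed of by the competitor $\delta_p$. For $d\neq0$ and $N<p$, the bounds $N\ge mp-(a-1)|d|$ and $N\ge mp/a$ correctly show that the term is at least $\psi_m(N)\ge\psi_2(N)$ with $N\in[2p/a,p)$.

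The missing inequality is $\psi_2(N)>\Omega_a(1)$ on that interval. You give a heuristic for it, then switch to a ``direct injection'' $d'\approx d/m$. Its absorption step is not checked, and its failures are to be settled ``by explicit inspection'' although nothing shows that only finitely many $(p,a)$ are involved. As it stands, the proposal does not prove the statement.

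Your diagnosis of where the difficulty lies is also off. For $a=2$ and $m\ge2$, your own bound gives $N\ge mp/2\ge p$, so the regime $N<p$ is empty and $a=2$ is already settled. The nontrivial range is $a\ge3$.

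The missing comparison does close inside your framework. Let $s=\lfloor p/a\rfloor\ge2$, $r=p-as\in[1,a-1]$ and $c=s+r\ge3$. The competitor $d=s$ gives $\Omega_a(1)\le\delta_c+s\delta_{a+1}=\psi_1(c)$. Moreover $\psi_2=\psi_1+(a+5)p$ and $\psi_1(c)-\psi_1(N)=(c-N)(c+N-a-3)$. So it suffices to show $(c-N)(c+N-a-3)<(a+5)p$ for $N\ge 2p/a$:
\begin{itemize}
\item If $c\le N$, the product is either $\le0$ or, by AM--GM, at most $\bigl(\frac{a+3-2c}{2}\bigr)^2\le\frac{(a-3)^2}{4}<(a+5)p$.
\item If $c>N$, then $N\ge2p/a>2s$ gives $0<c-N<r-s\le a-3$ and $c+N-a-3<2s+a-5$. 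The product is therefore either $\le0$ or less than $(a-3)(2s+a-5)<(a+5)as<(a+5)p$.
\end{itemize}

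Completed this way, your route differs from the paper's. The paper proves $\Omega_a(m)>mH_a$ via the superadditivity $\delta_{mD}-m\delta_D=(m-1)(mD^2+8)>0$. It then shows $\Omega_a(1)\le\delta_{s+r}+s\delta_{a+1}<2H_a$ by computing $H_a=\inf_{x\ge0}F_1(x)$ explicitly and checking two polynomial inequalities. Your version instead uses $\delta_p$ as a second competitor to discard $N\ge p$, and needs no formula for $H_a$.
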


\begin{proof}
See Appendix~\ref{app:arithmetic-dominance}.
\end{proof}

This lemma will be applied twice: near \(C_\alpha\), with \(a=q\), and near
\(C_\beta\), with \(a=|\mu_\beta|\). Its role is to rule out the possibility
that a higher reduced frequency \(2p,3p,\ldots\) dominates the fundamental
frequency \(p\).

We now translate the preceding arithmetic estimate into the local
Morse--Bott reduction. The point is that a reduced Fourier mode is produced
after two operations: first one solves the normal critical point equation, and
then one substitutes the resulting normal graph into the original function.
Thus a reduced term carries both the heat weight of the harmonic monomial itself
and the heat weights of the normal graph factors that are substituted into it.

Before doing the arithmetic bookkeeping, we record the analytic justification
for retaining only finitely many weighted Taylor--Fourier contributions. In the
heat-flow applications below the spectral sum is infinite; for a prescribed
weight bound \(B\), we first keep all spectral components with relative heat
weight \(\le B\). The remaining heat tail is then \(o(e^{-Bt})\) in any fixed
\(C^r\)-norm by the smoothing estimates for the heat semigroup and the discrete
spectrum. The following finite-dimensional lemma is the local form of this
truncation.

\begin{lemma}[Weighted Morse--Bott expansion]\label{lem:weighted-morse-bott-expansion}
Let
\[
F_t=\rho+\sum_{j=1}^N e^{-\sigma_j t}\phi_j+R_t,
\qquad
0<\sigma_1<\cdots<\sigma_N,
\]
in a fixed tubular neighborhood of either \(C_\alpha\) or \(C_\beta\).  Fix
\(B>0\), and assume that \(R_t=o(e^{-Bt})\) in \(C^4\). Let
\(Y_t(\theta)\) be the small normal graph solving
\[
\partial_yF_t(\theta,Y_t(\theta))=0.
\]
Then, modulo \(o(e^{-Bt})\) in \(C^2\), uniformly along the circle, both
\(Y_t\) and the reduced function
\[
F_t^{\mathrm{red}}(\theta)=F_t(\theta,Y_t(\theta))
\]
are finite sums of weighted terms
\[
e^{-(n_1\sigma_1+\cdots+n_N\sigma_N)t}
\Psi_{n_1,\ldots,n_N}(\theta),
\qquad n_i\in\mathbb Z_{\ge0},
\]
with total weight
\[
n_1\sigma_1+\cdots+n_N\sigma_N\le B.
\]
The coefficients \(\Psi_{n_1,\ldots,n_N}\) are universal polynomial expressions
in finitely many jets of the \(\phi_j\)'s along the critical circle.
\end{lemma}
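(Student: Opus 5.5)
We work in the tubular coordinates \((\theta,y)\) of the normal reduction lemma and write \(F_t=\rho+P_t+R_t\), where \(P_t=\sum_j e^{-\sigma_j t}\phi_j\). The idea is to expand \(Y_t\) and \(F^{\mathrm{red}}_t\) in formal power series in the small parameters \(\epsilon_j=e^{-\sigma_j t}\), truncate at total weight \(B\), and control the truncation error with the implicit function theorem.

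\textbf{Step 1 (formal expansion of the normal graph).} Near \(C_\alpha\) we have \(\partial_y\rho=-4y\) in the real coordinates \(y\in D^2\), and analogously \(+4y\) near \(C_\beta\). So the normal equation \(\partial_yF_t(\theta,y)=0\) can be rewritten as the fixed-point problem
\[
y=\tfrac14\,\partial_y\bigl(P_t+R_t\bigr)(\theta,y)
\]
near \(C_\alpha\), with the opposite sign near \(C_\beta\). We first ignore \(R_t\) and view \(P_t\) as a function of the independent parameters \(\epsilon=(\epsilon_1,\dots,\epsilon_N)\). The map
\[
(\epsilon,\theta,y)\mapsto y\mp\tfrac14\partial_yP(\epsilon,\theta,y)
\]
is smooth, and its \(y\)-derivative is invertible at \(\epsilon=0\). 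The implicit function theorem, applied uniformly in \(\theta\) on the compact circle, gives a smooth solution \(Y(\epsilon,\theta)\) with \(Y(0,\theta)=0\). We then Taylor-expand \(Y\) in \(\epsilon\) to order \(K\), choosing \(K\) with \(K\sigma_1>B\). The coefficient of \(\epsilon^{n}\) is found by matching powers in the fixed-point identity. It is a polynomial in the \(y\)-jets of the \(\phi_j\) at \(y=0\) of order at most \(|n|\), because each iteration uses one more derivative. This gives universality. We keep only the multi-indices with \(\sum n_i\sigma_i\le B\). The remaining monomials have \(\sum n_i\sigma_i>B\), and the Taylor remainder has order \(|\epsilon|^{K+1}=O(e^{-(K+1)\sigma_1t})\). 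There are finitely many dropped monomials and each has weight strictly greater than \(B\), so all of them are \(o(e^{-Bt})\). Smooth dependence on \(\theta\) makes these bounds hold in \(C^2\) in \(\theta\).

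\textbf{Step 2 (the effect of \(R_t\)).} Let \(Y_t\) be the true solution and \(Y^0_t\) the solution without \(R_t\). The linearized operator \(\mp4+O(\epsilon)\) is uniformly invertible, so the difference of the two fixed-point equations gives
\[
\|Y_t-Y^0_t\|_{C^2}\le C\|\partial_yR_t\|_{C^2}\le C\|R_t\|_{C^3}=o(e^{-Bt}).
\]
Here we use the \(C^4\) hypothesis on \(R_t\): differentiating the identity twice in \(\theta\) costs derivatives of \(\partial_yR_t\), and these stay controlled in \(C^3\subset C^4\).

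\textbf{Step 3 (the reduced function).} We substitute into
\[
F^{\mathrm{red}}_t(\theta)=\rho(\theta,Y_t)+P_t(\theta,Y_t)+R_t(\theta,Y_t).
\]
The term \(R_t(\theta,Y_t)\) is \(o(e^{-Bt})\) in \(C^2\) by the chain rule. Replacing \(Y_t\) by \(Y^0_t\) changes the other terms by \(O(\|Y_t-Y^0_t\|_{C^2})\). Moreover, \(\partial_y(\rho+P_t)\) vanishes at \(Y^0_t\), so the change is in fact quadratic in the difference. The composition \(\rho(\theta,Y^0)+P(\epsilon,\theta,Y^0)\) is again smooth in \(\epsilon\), and its Taylor coefficients are polynomials in the jets of the \(\phi_j\). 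We truncate by weight exactly as in Step 1.

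\textbf{Main obstacle.} The delicate point is to keep all estimates uniform in \(C^2\) along \(\theta\) while also bounding the Taylor remainders in \(\epsilon\) uniformly. I would handle this by treating \((\epsilon,\theta)\) jointly as parameters in a single Banach-space implicit function theorem. Concretely, I would solve for \(Y\in C^2(S^1,D^2)\) as a smooth function of \(\epsilon\); this yields smooth dependence with uniform \(C^2\) bounds.
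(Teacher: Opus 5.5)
Your proposal is correct and follows essentially the same route as the paper: the implicit function theorem in the parameters \(\varepsilon_j=e^{-\sigma_j t}\) with \(R_t\) suppressed, then Taylor truncation at an order \(K\) with \(K\sigma_1>B\) (finitely many discarded monomials, each of weight \(>B\)), then comparison of the true graph with the \(R_t\)-free graph through a uniformly invertible averaged normal Hessian, and finally substitution into \(F_t\) with the same weight truncation. The differences are cosmetic: you use the exact normal Hessian \(\mp4\) from the explicit coordinates instead of a general \(H(\theta)\), and you add an envelope remark that is not needed. There is also a small slip: \(C^3\subset C^4\) should read \(C^4\subset C^3\), i.e. \(\|\cdot\|_{C^3}\le\|\cdot\|_{C^4}\).
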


\begin{proof}
We give the proof near one critical circle; the two cases are identical. Write
local coordinates as \((\theta,y)\), where \(y\in\mathbb R^2\) is normal. Since
the circle is compact and the normal Hessian of \(\rho\) is nondegenerate along
it, after shrinking the tubular neighborhood we may write
\[
\partial_y\rho(\theta,y)=H(\theta)y+Q(\theta,y),
\qquad Q(\theta,y)=O(|y|^2),
\]
with \(H(\theta)\) uniformly invertible. Set
\(\varepsilon_j=e^{-\sigma_jt}\) and
\(\varepsilon=(\varepsilon_1,\ldots,\varepsilon_N)\), and first suppress the
remainder \(R_t\). The finite-parameter normal equation is
\[
\partial_y\left(\rho+\sum_{j=1}^N\varepsilon_j\phi_j\right)
(\theta,Y(\theta,\varepsilon))=0.
\]
The parameter-dependent implicit function theorem, applied uniformly in
\(\theta\), gives a unique small solution \(Y(\theta,\varepsilon)\), smooth in
\((\theta,\varepsilon)\), for \(|\varepsilon|\) small. Its Taylor expansion at
\(\varepsilon=0\) has the form
\[
Y(\theta,\varepsilon)
=
\sum_{|n|<M}\varepsilon^nY_n(\theta)
+O(|\varepsilon|^M)
\]
in \(C^3\), where
\(\varepsilon^n=\varepsilon_1^{n_1}\cdots\varepsilon_N^{n_N}\). Choose \(M\) so
large that \(M\sigma_1>B\). Then the Taylor remainder is \(o(e^{-Bt})\). Among
the finitely many monomials with \(|n|<M\), retain exactly those whose weighted
order
\[
|n|_\sigma=n_1\sigma_1+\cdots+n_N\sigma_N
\]
is at most \(B\). The remaining retained Taylor monomials have weights strictly
larger than \(B\); because there are only finitely many of them, their weights
are separated from \(B\) by a positive gap. Hence
\[
Y(\theta,e^{-\sigma_1t},\ldots,e^{-\sigma_Nt})
=
\sum_{|n|_\sigma\le B}e^{-|n|_\sigma t}Y_n(\theta)+o(e^{-Bt})
\]
in \(C^3\).

Now reintroduce \(R_t\). Let \(Y_t\) be the solution for the full function.
Set \(\varepsilon(t)=(e^{-\sigma_1t},\ldots,e^{-\sigma_Nt})\), and let
\(Y_t^0=Y(\theta,\varepsilon(t))\) be the solution with \(R_t=0\). The two
normal equations give
\[
A_t(\theta)(Y_t-Y_t^0)=-\partial_yR_t(\theta,Y_t),
\]
where
\[
A_t(\theta)=\int_0^1
\partial_y^2\left(\rho+\sum_{j=1}^N e^{-\sigma_jt}\phi_j\right)
\bigl(\theta,Y_t^0+s(Y_t-Y_t^0)\bigr)\,ds.
\]
The matrices \(A_t(\theta)\) converge to \(H(\theta)\) in \(C^1\), uniformly in
\(\theta\). Therefore \(A_t(\theta)\) is uniformly invertible for all large
\(t\). Since \(R_t=o(e^{-Bt})\) in \(C^4\), this gives
\[
Y_t-Y_t^0=o(e^{-Bt})
\]
in \(C^2\). Thus the normal graph has the asserted weighted expansion.

Finally substitute the expansion of \(Y_t\) into \(F_t(\theta,y)\). Taylor's
formula in the \(y\)-variables, together with the same finite weighted-order
truncation, gives a finite sum of terms
\[
e^{-|n|_\sigma t}\Psi_n(\theta),
\qquad |n|_\sigma\le B,
\]
modulo \(o(e^{-Bt})\) in \(C^2\). The coefficients are polynomial expressions in
the Taylor coefficients of the \(\phi_j\)'s and in the coefficients \(Y_n\). The
\(Y_n\)'s are obtained recursively by applying \(H(\theta)^{-1}\) to polynomial
expressions in jets of the \(\phi_j\)'s. Hence the coefficients are universal
polynomial expressions in finitely many jets of the \(\phi_j\)'s along the
critical circle. This proves the lemma.
\end{proof}

The following lemma records the resulting bookkeeping. For each reduced
frequency \(mp\), it gives the least possible heat weight near each of the two
Morse--Bott circles.

\begin{lemma}[Resonant heat weights]
Let
\[
p\geq3,\qquad 1<q<p/2,\qquad (p,q)=1.
\]
Write
\[
\delta_k=\lambda_k-\lambda_2=k(k+2)-8.
\]

Near \(C_\alpha\), use coordinates
\[
\alpha=\sqrt{1-|z|^2}e^{i\theta},
\qquad
\beta=z.
\]
The lowest normal-linear branch has the form
\[
e^{-iq\theta}z,
\]
appears first in degree \(q+1\), and has heat weight
\[
\delta_{q+1}.
\]
For \(m\geq1\) and \(d\in\mathbb Z\), define
\[
D_{\alpha,m}(d)=|mp-qd|+|d|
\]
and
\[
W_{\alpha,m}(d)
=
\delta_{D_{\alpha,m}(d)}
+
|d|\delta_{q+1}.
\]
Set
\[
\Omega_{\alpha,m}
=
\min_{d\in\mathbb Z}W_{\alpha,m}(d),
\]
and, in particular,
\[
\Omega_\alpha=\Omega_{\alpha,1},
\qquad
\mathcal D_\alpha
=
\{d\in\mathbb Z:W_{\alpha,1}(d)=\Omega_\alpha\}.
\]

Near \(C_\beta\), use coordinates
\[
\alpha=z,
\qquad
\beta=\sqrt{1-|z|^2}e^{i\theta}.
\]
Let \(\mu_\beta\) be the allowed normal-linear frequency of smallest absolute
value:
\[
q\mu_\beta+1\equiv0\pmod p,
\]
with
\[
|\mu_\beta|=\min\{|s|,|s-p|\},
\]
where \(s\) satisfies
\[
1<s<p-1,
\qquad
qs\equiv-1\pmod p.
\]
The lowest normal-linear branch near \(C_\beta\) has the form
\[
e^{i\mu_\beta\theta}z.
\]
Put
\[
\tau_\beta=-\mu_\beta.
\]
For \(m\geq1\) and \(d\in\mathbb Z\), define
\[
D_{\beta,m}(d)=|mp-\tau_\beta d|+|d|
\]
and
\[
W_{\beta,m}(d)
=
\delta_{D_{\beta,m}(d)}
+
|d|\delta_{|\mu_\beta|+1}.
\]
Set
\[
\Omega_{\beta,m}
=
\min_{d\in\mathbb Z}W_{\beta,m}(d),
\]
and, in particular,
\[
\Omega_\beta=\Omega_{\beta,1},
\qquad
\mathcal D_\beta
=
\{d\in\mathbb Z:W_{\beta,1}(d)=\Omega_\beta\}.
\]

Then
\[
\Omega_{\alpha,1}<\Omega_{\alpha,m},
\qquad
\Omega_{\beta,1}<\Omega_{\beta,m}
\]
for every
\[
m\geq2.
\]
\end{lemma}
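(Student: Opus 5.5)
The inequalities in the conclusion are two instances of the Dominance lemma for the fundamental reduced frequency. So the plan has two parts. First, check that the data defining \(\Omega_{\alpha,m}\) and \(\Omega_{\beta,m}\) are exactly of the form \(\Omega_a(m)\) with an admissible parameter \(a\). Second, confirm the descriptive claims about the lowest normal-linear branches, which fix those parameters.

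For the branches, I would use the phase-weight convention. Near \(C_\alpha\), a term that is linear in the normal variable \(z=\beta\) comes from a monomial \(\alpha^{a_1}\overline\alpha^{a_2}\beta\) or its conjugate. Its phase weight is \((a_1-a_2,1)\), and \(\Gamma\)-invariance forces \(a_1-a_2\equiv -q \pmod p\). The smallest possible \(|a_1-a_2|\) is \(q\), attained by \(\overline\alpha^{\,q}\beta\). This term has degree \(q+1\) and restricts to \(e^{-iq\theta}z\) up to the factor \((1-|z|^2)^{q/2}\). It is a genuine harmonic by the holomorphic/antiholomorphic observation, so its heat weight is \(\delta_{q+1}\). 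Since \(q<p/2\), the competing representative \(p-q\) has larger absolute value. Near \(C_\beta\) the same argument applies to \(\alpha\beta^{a_1}\overline\beta^{a_2}\). Invariance becomes \(1+q(a_1-a_2)\equiv0\pmod p\), which is the defining congruence for \(\mu_\beta\). This gives the branch \(e^{i\mu_\beta\theta}z\) in degree \(|\mu_\beta|+1\).

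Next I would verify the hypotheses of the Dominance lemma. For \(a=q\) this is immediate from \(1<q<p/2\) and \((p,q)=1\). For \(a=|\mu_\beta|\), proceed in three steps:
\begin{itemize}
\item[(i)] From \(q\mu_\beta\equiv-1\pmod p\) it follows that \((\mu_\beta,p)=1\).
\item[(ii)] By construction \(|\mu_\beta|=\min\{|s|,|s-p|\}\le p/2\). Equality would force \(p\) even and \(|\mu_\beta|=p/2\), contradicting coprimality since \(p\ge3\). Hence \(|\mu_\beta|<p/2\).
\item[(iii)] If \(|\mu_\beta|=1\), then \(q\equiv\mp1\pmod p\), so \(q\in\{1,p-1\}\). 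This is excluded by \(1<q<p/2\). Hence \(|\mu_\beta|>1\).
\end{itemize}

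It then remains to match the formulas. \(W_{\alpha,m}(d)\) is literally the bracket in \(\Omega_q(m)\), so \(\Omega_{\alpha,m}=\Omega_q(m)\). For \(C_\beta\), \(\tau_\beta=\pm|\mu_\beta|\). If \(\tau_\beta=|\mu_\beta|\), then \(\Omega_{\beta,m}=\Omega_{|\mu_\beta|}(m)\) directly. If \(\tau_\beta=-|\mu_\beta|\), substitute \(d\mapsto -d\). This leaves \(|d|\) unchanged and turns \(|mp+|\mu_\beta|d|\) into \(|mp-|\mu_\beta|d|\). The minimum over \(d\in\mathbb Z\) is invariant under this substitution, so again \(\Omega_{\beta,m}=\Omega_{|\mu_\beta|}(m)\).

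Applying the Dominance lemma with \(a=q\) and with \(a=|\mu_\beta|\) yields both strict inequalities for all \(m\ge2\). The only real obstacle, the arithmetic comparison of \(\Omega_a(1)\) with \(\Omega_a(m)\), is deferred to the appendix. The step that needs the most care here is the sign and range bookkeeping for \(\mu_\beta\), in particular ruling out \(|\mu_\beta|\in\{1,p/2\}\).
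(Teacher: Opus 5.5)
Your proposal is correct and follows the paper's argument for the stated inequalities. You identify $\Omega_{\alpha,m}=\Omega_q(m)$ and, after the substitution $d\mapsto -d$, $\Omega_{\beta,m}=\Omega_{|\mu_\beta|}(m)$. You then check $(|\mu_\beta|,p)=1$, $|\mu_\beta|\neq1$ and $|\mu_\beta|<p/2$ (your coprimality argument excluding $|\mu_\beta|=p/2$ is slightly more careful than the paper's appeal to the definition), and invoke the arithmetic dominance lemma twice.

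The only difference is in scope. The paper's proof also spends most of its length arguing that $\Omega_{\alpha,m}$ and $\Omega_{\beta,m}$ are the least heat weights actually attainable at reduced frequency $mp$ after Morse--Bott reduction, since other normal-graph branches and extra $z\overline z$ factors only raise the weight. That claim is not part of the literal conclusion, but it is what the later lemma on dominance of the first resonant harmonic cites from this one.
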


\begin{proof}
We begin near \(C_\alpha\).

A local Taylor--Fourier monomial near \(C_\alpha\) has the form
\[
e^{i\ell\theta}z^a\overline z^{\,b},
\qquad
a,b\geq0.
\]
The generator acts by
\[
(\theta,z)\mapsto
\left(\theta+\frac{2\pi}{p},\zeta^qz\right).
\]
Thus \(\Gamma\)-invariance forces
\[
\ell+q(a-b)\equiv0\pmod p.
\]
Let
\[
d=a-b.
\]
The leading normal graph associated to the lowest normal-linear branch has phase
\[
z\sim e^{iq\theta},
\qquad
\overline z\sim e^{-iq\theta}.
\]
After substitution, the monomial contributes phase
\[
e^{i(\ell+qd)\theta}.
\]

To produce reduced frequency \(mp\), we must have
\[
\ell+qd=mp.
\]
Hence
\[
\ell=mp-qd.
\]
For fixed \(d\), the smallest possible normal order \(a+b\) with \(a-b=d\) is
\[
|d|.
\]
Therefore the smallest possible spherical degree of a term with resulting
frequency \(mp\) and normal difference \(d\) is
\[
|\ell|+|d|
=
|mp-qd|+|d|
=
D_{\alpha,m}(d).
\]
Such a term is realized by the harmonic monomial
\[
\alpha^{\ell_+}\overline\alpha^{\,\ell_-}
\beta^{d_+}\overline\beta^{\,d_-},
\]
where
\[
\ell_+=\max\{\ell,0\},\quad
\ell_-=\max\{-\ell,0\},
\]
and
\[
d_+=\max\{d,0\},\quad d_-=\max\{-d,0\}.
\]
Its total degree is \(D_{\alpha,m}(d)\), and it is \(\Gamma\)-invariant because
\[
\ell+qd=mp\equiv0\pmod p.
\]

The coefficient of such a monomial has heat weight
\[
\delta_{D_{\alpha,m}(d)}.
\]
Substituting the leading normal graph contributes \(|d|\) factors, each with
weight
\[
\delta_{q+1}.
\]
Therefore the least possible heat weight for frequency \(mp\) and normal
difference \(d\) is
\[
W_{\alpha,m}(d)
=
\delta_{D_{\alpha,m}(d)}
+
|d|\delta_{q+1}.
\]
Minimizing over \(d\in\mathbb Z\) gives
\[
\Omega_{\alpha,m}.
\]

We now justify that no other contribution produced by the normal reduction has
smaller heat weight.

The normal graph is obtained by solving the normal critical point equation by
the implicit function theorem. Consequently it has an expansion in the spectral
coefficients of the perturbation. Its lowest nonzero term is produced by the
lowest normal-linear branch
\[
e^{-iq\theta}z,
\]
and has heat weight
\[
\delta_{q+1}.
\]
Every other term in the normal graph has strictly larger heat weight. Indeed,
such a term either comes from a higher normal-linear branch, hence from an
eigenspace of degree strictly larger than \(q+1\), or from a nonlinear expression
in previously determined terms of the normal graph. In the latter case its heat
weight is a sum of at least two positive heat weights, and is therefore strictly
larger than the lowest weight \(\delta_{q+1}\).

Thus, if one substitutes into a Taylor--Fourier monomial and uses any branch of
the normal graph other than the leading branch, the resulting heat weight is
strictly larger than the weight obtained by using only the leading branch.

It remains to check that adding extra normal factors cannot lower the weight.
For fixed
\[
d=a-b,
\]
the smallest possible normal order is
\[
a+b=|d|.
\]
Any other monomial with the same value of \(d\) has normal order
\[
|d|+2j,
\qquad
j\geq1.
\]
Such an extra factor \(z\overline z\) does not change the resulting Fourier
phase, but it increases the spherical degree from
\[
|mp-qd|+|d|
\]
to
\[
|mp-qd|+|d|+2j,
\]
and it also requires two additional factors of the normal graph. Since
\[
\delta_k=k(k+2)-8
\]
is strictly increasing for \(k\geq2\), both effects strictly increase the heat
weight.

Therefore, for a fixed resulting reduced frequency \(mp\), the least possible
heat weight is exactly
\[
W_{\alpha,m}(d)
=
\delta_{D_{\alpha,m}(d)}
+
|d|\delta_{q+1},
\]
minimized over \(d\in\mathbb Z\). Hence
\[
\Omega_{\alpha,m}
\]
is indeed the first possible heat weight for the reduced frequency \(mp\) near
\(C_\alpha\).

Now apply the dominance of the fundamental reduced frequency lemma with
\[
a=q.
\]
Since
\[
1<q<p/2,\qquad (q,p)=1,
\]
that lemma gives
\[
\Omega_{\alpha,1}<\Omega_{\alpha,m}
\qquad
\text{for every }m\geq2.
\]
Thus the first possible nonconstant reduced frequency near \(C_\alpha\) is
\(p\), and its first heat weight is
\[
\Omega_\alpha=\Omega_{\alpha,1}.
\]

The proof near \(C_\beta\) is the same, with \(q\) replaced by
\[
\tau_\beta=-\mu_\beta.
\]

Indeed, a local Taylor--Fourier monomial near \(C_\beta\) has the form
\[
e^{i\ell\theta}z^a\overline z^{\,b},
\qquad
a,b\geq0.
\]
Let
\[
d=a-b.
\]
The leading normal graph near \(C_\beta\) has phase
\[
z\sim e^{i\tau_\beta\theta},
\qquad
\overline z\sim e^{-i\tau_\beta\theta},
\]
where
\[
\tau_\beta=-\mu_\beta.
\]
Therefore, after substituting the leading normal graph, the monomial contributes
phase
\[
e^{i(\ell+\tau_\beta d)\theta}.
\]
To produce reduced frequency \(mp\), we must have
\[
\ell+\tau_\beta d=mp.
\]
Thus
\[
\ell=mp-\tau_\beta d.
\]

For fixed \(d\), the smallest possible normal order is \(|d|\), and therefore
the smallest possible spherical degree is
\[
|\ell|+|d|
=
|mp-\tau_\beta d|+|d|
=
D_{\beta,m}(d).
\]
The corresponding coefficient has heat weight
\[
\delta_{D_{\beta,m}(d)}.
\]
Substituting the leading normal graph contributes \(|d|\) factors of the lowest
normal graph, each of heat weight
\[
\delta_{|\mu_\beta|+1}.
\]
Thus the least possible heat weight associated to \(d\) is
\[
W_{\beta,m}(d)
=
\delta_{D_{\beta,m}(d)}
+
|d|\delta_{|\mu_\beta|+1}.
\]

As near \(C_\alpha\), using a higher branch of the normal graph strictly
increases the heat weight. Adding extra factors \(z\overline z\) also strictly
increases the heat weight without changing the resulting Fourier phase. Hence,
after minimizing over \(d\in\mathbb Z\),
\[
\Omega_{\beta,m}
=
\min_{d\in\mathbb Z}W_{\beta,m}(d)
\]
is indeed the first possible heat weight for the reduced frequency \(mp\) near
\(C_\beta\).

It remains only to relate this expression to the preceding arithmetic lemma.
Since
\[
\tau_\beta=-\mu_\beta,
\]
we have
\[
|\tau_\beta|=|\mu_\beta|.
\]
Moreover, because the minimum is taken over all
\[
d\in\mathbb Z,
\]
the sign of \(\tau_\beta\) is irrelevant: replacing \(d\) by \(-d\), if
necessary, gives
\[
\min_{d\in\mathbb Z}
\left[
\delta_{|mp-\tau_\beta d|+|d|}
+
|d|\delta_{|\mu_\beta|+1}
\right]
=
\min_{d\in\mathbb Z}
\left[
\delta_{|mp-|\mu_\beta|d|+|d|}
+
|d|\delta_{|\mu_\beta|+1}
\right].
\]
Thus the arithmetic lemma applies with
\[
a=|\mu_\beta|.
\]

We now check its hypotheses. Since
\[
q\mu_\beta+1\equiv0\pmod p,
\]
the integer \(\mu_\beta\) is invertible modulo \(p\). Therefore
\[
(|\mu_\beta|,p)=1.
\]
Moreover, \(|\mu_\beta|\neq1\). If \(\mu_\beta=1\), then
\[
q+1\equiv0\pmod p,
\]
which is impossible under \(1<q<p/2\). If \(\mu_\beta=-1\), then
\[
-q+1\equiv0\pmod p,
\]
which would force \(q=1\), again impossible. Finally, by the definition of
\(\mu_\beta\) as the representative of smallest absolute value,
\[
|\mu_\beta|<p/2.
\]
Thus
\[
1<|\mu_\beta|<p/2,
\qquad
(|\mu_\beta|,p)=1.
\]
The dominance of the fundamental reduced frequency lemma therefore gives
\[
\Omega_{\beta,1}<\Omega_{\beta,m}
\qquad
\text{for every }m\geq2.
\]
This proves the lemma.

\end{proof}

Thus, after Morse--Bott reduction, the first possible nonconstant reduced
frequency is the fundamental frequency \(p\), both near \(C_\alpha\) and near
\(C_\beta\). The next task is to identify the corresponding coefficients and
show that they are generically nonzero.

We now isolate the coefficient of the dominant reduced mode. Since there may be
several resonant Taylor--Fourier terms with the same minimal heat weight, the
coefficient is defined as a total coefficient: one sums all contributions of
heat weight \(\Omega_\alpha\) near \(C_\alpha\), and all contributions of heat
weight \(\Omega_\beta\) near \(C_\beta\).

\begin{definition}[First resonant coefficients]
With the notation of the resonant heat weights lemma, work after the
normalization
\[
U(t)=a^{-1}e^{\lambda_2t}(u(t)-f_0)=\rho+\text{positive heat-weight terms},
\]
so that every nonconstant spectral component has heat weight
\[
\delta_k=\lambda_k-\lambda_2.
\]
For a fixed bound \(B\), the weighted Morse--Bott expansion lemma expresses the
normal graph and the reduced function, modulo \(o(e^{-Bt})\), as a finite
weighted Taylor--Fourier expansion. We call the individual contributions in this
finite expansion \emph{weighted Taylor--Fourier trees}: this is only a
bookkeeping term for the finitely many expressions produced by iterating the
implicit-function expansion, not an additional combinatorial structure. Each
such contribution has a resulting Fourier frequency on the reduced circle, a
total heat weight, and a coefficient which is a universal polynomial in finitely
many normalized spectral coefficients of \(f\).

Near \(C_\alpha\), define
\[
\mathfrak A_\alpha(f)
\]
to be the sum of the coefficients of all weighted Taylor--Fourier trees whose
resulting reduced frequency is \(+p\) and whose total heat weight is exactly
\[
\Omega_\alpha=\Omega_{\alpha,1}.
\]
Equivalently, \(\mathfrak A_\alpha(f)\) is the total coefficient of
\(e^{ip\theta}\) in the part of the reduced function near \(C_\alpha\) having
heat weight exactly \(\Omega_\alpha\). This is the same as summing all minimal
contributions indexed by
\[
d\in\mathcal D_\alpha
=\{d\in\mathbb Z: W_{\alpha,1}(d)=\Omega_\alpha\}.
\]

Similarly, define
\[
\mathfrak A_\beta(f)
\]
to be the sum of the coefficients of all weighted Taylor--Fourier trees whose
resulting reduced frequency is \(+p\) and whose total heat weight is exactly
\[
\Omega_\beta=\Omega_{\beta,1}.
\]
Equivalently, \(\mathfrak A_\beta(f)\) is the total coefficient of
\(e^{ip\theta}\) in the part of the reduced function near \(C_\beta\) having
heat weight exactly \(\Omega_\beta\), or the sum of the minimal contributions
indexed by
\[
d\in\mathcal D_\beta
=\{d\in\mathbb Z: W_{\beta,1}(d)=\Omega_\beta\}.
\]

Thus
\[
\mathfrak A_\alpha,
\qquad
\mathfrak A_\beta
\]
are finite-dimensional polynomial functions of the relevant normalized spectral
coefficients of \(f\). More precisely, \(\mathfrak A_\alpha\) depends only on the
lowest normal-linear coefficient near \(C_\alpha\) and on the projections of
\(f\) to the eigenspaces
\[
E_{D_{\alpha,1}(d)},
\qquad
d\in\mathcal D_\alpha,
\]
together with the finitely many lower-weight spectral components that enter the
normal graph at total weight at most \(\Omega_\alpha\). Likewise,
\(\mathfrak A_\beta\) depends only on the lowest normal-linear coefficient near
\(C_\beta\), on the projections of \(f\) to
\[
E_{D_{\beta,1}(d)},
\qquad
d\in\mathcal D_\beta,
\]
and on the finitely many lower-weight spectral components entering the normal
graph at total weight at most \(\Omega_\beta\). In particular, after the leading
coefficient \(a\) of \(f_2=a\rho\) is fixed nonzero, the two quantities above
are honest polynomial functions of finitely many normalized spectral
coordinates. As functions of the original spectral coordinates, they become
polynomial after multiplication by a suitable power of \(a\).
\end{definition}

We now prove that the two resonant coefficients introduced above are not
artifacts of notation. Each of them is a genuine, nonzero polynomial function of
the relevant spectral data. This is the algebraic input that later makes the
nonvanishing conditions generic.

\begin{lemma}[Nontriviality of the first resonant coefficients]
Let
\[
p\geq3,\qquad 1<q<p/2,\qquad (p,q)=1.
\]
The first resonant coefficients are not identically zero:
\[
\mathfrak A_\alpha\not\equiv0,
\qquad
\mathfrak A_\beta\not\equiv0.
\]
\end{lemma}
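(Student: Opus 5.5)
The plan is to exhibit, for each circle, explicit initial data $f$ at which the resonant coefficient is nonzero. Since $\mathfrak A_\alpha$ and $\mathfrak A_\beta$ are polynomials in finitely many normalized spectral coordinates, a single such $f$ proves the polynomial is not identically zero.

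\textbf{Choice of test data near $C_\alpha$.} Fix $d_0\in\mathcal D_\alpha$, and set $\ell_0=p-qd_0$. Take
\[
f=f_0+a\rho+\varepsilon_1\,\mathrm{Re}\,\phi_1+\varepsilon_2\,\mathrm{Re}\,\phi_2 ,
\]
where the two harmonic monomials are as follows.
\begin{itemize}
\item $\phi_1=\overline\alpha^{\,q}\beta$ has degree $q+1$ and phase weight $(-q,1)$, so it is $\Gamma$-invariant. It produces the lowest normal-linear branch $e^{-iq\theta}z$.
\item $\phi_2=\alpha^{\ell_{0,+}}\overline\alpha^{\,\ell_{0,-}}\beta^{d_{0,+}}\overline\beta^{\,d_{0,-}}$ has degree $D_{\alpha,1}(d_0)$. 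It is harmonic and invariant by the observation in the invariant spherical harmonics subsection.
\end{itemize}
All other spectral components are zero. With $f$ this sparse, the weighted Morse--Bott expansion lemma involves only two exponentials, $\sigma_1=\delta_{q+1}$ and $\sigma_2=\delta_{D_{\alpha,1}(d_0)}$. The normal graph is then computed explicitly. Since $\rho=1-2|z|^2$, the normal equation $\partial_{\bar z}(\rho+e^{-\sigma_1t}\varepsilon_1\mathrm{Re}\,\phi_1)=0$ gives
\[
Y=\tfrac14\varepsilon_1e^{-\sigma_1 t}e^{iq\theta}+O(e^{-2\sigma_1t}),
\]
up to the factor $(1-|z|^2)^{q/2}$, which only contributes higher normal order.

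\textbf{Isolating the coefficient.} Substituting $Y$ into $\rho+\dots$ gives reduced contributions of two kinds: $\rho$ and $\phi_1$ contribute only frequency $0$ terms and multiples of $\pm p$, always with higher weights, while the main term comes from $\phi_2$. The leading-branch substitution into $\phi_2$ gives
\[
\tfrac12\varepsilon_2(\varepsilon_1/4)^{|d_0|}e^{-\Omega_\alpha t}e^{ip\theta}.
\]
Both other contributions must be shown not to cancel it. First, within this family, any other tree at weight $\Omega_\alpha$ and frequency $p$ must use exactly one factor $\varepsilon_2$; a tree with no $\varepsilon_2$ has weight $n\sigma_1$ and frequency a multiple of $q$ coming only from $\rho,\phi_1$, and the phase count shows it has frequency $0$, never $p$. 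Second, if $\mathcal D_\alpha$ contains other $d$, their monomials are absent from $f$ (we set those components to zero), so they do not contribute. Hence, as a polynomial, $\mathfrak A_\alpha$ contains the monomial $c\,\varepsilon_2\varepsilon_1^{|d_0|}$ with $c\neq0$ and is not identically zero. A cleaner alternative is to note that $\mathfrak A_\alpha$ is linear in the $E_{D_{\alpha,1}(d_0)}$-coordinate, with coefficient a nonzero multiple of $\varepsilon_1^{|d_0|}$, and then to differentiate.

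\textbf{Near $C_\beta$, and the main obstacle.} The argument near $C_\beta$ is identical, with $\phi_1$ replaced by the harmonic monomial $\alpha\,\beta^{\mu_\beta}$, interpreted as $\alpha\overline\beta^{\,|\mu_\beta|}$ when $\mu_\beta<0$. Its weight is $(1,\mu_\beta)$, invariant because $1+q\mu_\beta\equiv0$. The role of $\phi_2$ is played by a monomial built from $d_0\in\mathcal D_\beta$. The step I expect to be the main obstacle is the bookkeeping showing that no other tree of the same weight $\Omega$ and frequency $p$ involving the chosen components cancels the leading term. This includes contributions from higher Taylor terms of $\phi_2$ in $z$, and from the $\sqrt{1-|z|^2}$ factors. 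I would handle this by the degree and phase counting already used in the resonant heat weights lemma: any such term either changes $d$ or adds $z\overline z$ factors, and therefore strictly increases the weight. If the exact constant is in doubt, it suffices that it is a product of nonzero factors, namely powers of $1/4$ and binomial-type constants from $\mathrm{Re}$.
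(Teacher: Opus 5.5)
Your test data ($\rho$, the normal-linear monomial $\overline\alpha^{\,q}\beta$, and the resonant monomial $M_{\alpha,d_0}$) and your leading term $\tfrac12\varepsilon_2(\varepsilon_1/4)^{|d_0|}e^{ip\theta}$ match the paper's. Your symmetry argument that the $\varepsilon_2$-free terms are constant in $\theta$ is also correct. The gap is in the no-cancellation step.

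You claim that substituting the normal graph into $\rho$ and $\varepsilon_1\operatorname{Re}\phi_1$ gives frequency-$\pm p$ terms only at higher weight. This is false. The normal graph has a correction linear in $\varepsilon_2$. Inserting it into $\rho+\varepsilon_1\operatorname{Re}\phi_1$ gives terms with the same weight $\Omega_\alpha$, the same frequency $p$, and the same monomial $\varepsilon_2\varepsilon_1^{|d_0|}$ as your main term.

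Concretely, take $p=5$, $q=2$, with $a=1$ and the heat factors absorbed into $\varepsilon_1,\varepsilon_2$. Then $\mathcal D_\alpha=\{2\}$, $\phi_1=\overline\alpha^{\,2}\beta$ and $\phi_2=\alpha\beta^2$ both lie in $E_3$, and $\Omega_\alpha=21=3\delta_3$. The normal equation $\partial_{\overline z}F=0$ reads $-2z+\tfrac{\varepsilon_1}{2}e^{2i\theta}+\varepsilon_2e^{-i\theta}\overline z+\cdots=0$. Hence $Y=\tfrac{\varepsilon_1}{4}e^{2i\theta}+\tfrac{\varepsilon_1\varepsilon_2}{8}e^{-3i\theta}+\cdots$. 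The $e^{5i\theta}$-coefficient of the reduced function then receives $-\tfrac1{16}\varepsilon_1^2\varepsilon_2$ from $\rho=1-2|z|^2$ and $+\tfrac1{16}\varepsilon_1^2\varepsilon_2$ from $\varepsilon_1\operatorname{Re}\phi_1$. Each is twice your main term $\tfrac1{32}\varepsilon_1^2\varepsilon_2$. They do cancel. However, the degree-and-phase counting you propose cannot detect this, because these terms match the main term in weight, frequency and monomial.

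The missing idea is the envelope identity, which is the key step of the paper's proof. Since $\partial_yF=0$ along the normal graph, one has exactly $\partial_{\varepsilon_2}R(\theta)=\operatorname{Re}\phi_2(\theta,Y(\theta))$. So the $\varepsilon_2$-linear part of $R$ is $\operatorname{Re}\phi_2(\theta,Y_0(\theta))$, where $Y_0$ is the graph for $\rho+\varepsilon_1\operatorname{Re}\phi_1$. Every $\varepsilon_2$-linear contribution routed through the normal graph drops out identically.

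With this identity, the coefficient of $\varepsilon_2\varepsilon_1^{|d_0|}e^{ip\theta}$ is read off from $\phi_2(\theta,Y_0)$ alone. Your remaining counting then works: the $\sqrt{1-|z|^2}$ factors in $\phi_2$ and the higher corrections to $Y_0$ only produce $\varepsilon_2\varepsilon_1^{|d_0|+j}$ with $j\ge1$.

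Your ``cleaner alternative'' points in this direction, but as written it assumes the conclusion. You also do not need, and have not shown, either of two further claims: that $\mathfrak A_\alpha$ is linear in the $E_{D_{\alpha,1}(d_0)}$-coordinate, or that only one factor $\varepsilon_2$ can occur at weight $\Omega_\alpha$. The first variation at $\varepsilon_2=0$ suffices, since distinct monomials cannot cancel. The same repair is needed near $C_\beta$.
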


\begin{proof}
We prove the statement for \(\mathfrak A_\alpha\). The proof for
\(\mathfrak A_\beta\) is analogous and is given afterward.

Near
\[
C_\alpha=\{\beta=0\},
\]
write
\[
\alpha=\sqrt{1-|z|^2}e^{i\theta},
\qquad
\beta=z.
\]
The lowest normal-linear branch is
\[
e^{-iq\theta}z.
\]
It is realized by the harmonic monomial
\[
\overline{\alpha}^{\,q}\beta.
\]
Indeed, this monomial is harmonic because it is holomorphic in \(\beta\) and
anti-holomorphic in \(\alpha\), and it is \(\Gamma\)-invariant because its
weight is
\[
-q+q=0.
\]
Thus the real-valued function
\[
L_\alpha
=
2\operatorname{Re}\left(A\,\overline{\alpha}^{\,q}\beta\right),
\qquad A\in\mathbb C,\ A\neq0,
\]
has local leading term
\[
2\operatorname{Re}\left(Ae^{-iq\theta}z\right).
\]

Choose
\[
d\in\mathcal D_\alpha,
\]
so that
\[
W_{\alpha,1}(d)=\Omega_\alpha.
\]
Set
\[
\ell=p-qd.
\]
Then
\[
\ell+qd=p\equiv0\pmod p.
\]
Write
\[
\ell_+=\max\{\ell,0\},
\qquad
\ell_-=\max\{-\ell,0\},
\]
and
\[
d_+=\max\{d,0\},
\qquad
d_-=\max\{-d,0\}.
\]
Consider the monomial
\[
M_{\alpha,d}
=
\alpha^{\ell_+}
\overline{\alpha}^{\,\ell_-}
\beta^{d_+}
\overline{\beta}^{\,d_-}.
\]
This monomial is harmonic on \(\mathbb C^2\), because in each complex variable
it involves either the variable or its conjugate, but not both. Its total degree
is
\[
|\ell|+|d|
=
|p-qd|+|d|
=
D_{\alpha,1}(d).
\]
It is \(\Gamma\)-invariant because its weight is
\[
\ell+qd=p\equiv0\pmod p.
\]
Near \(C_\alpha\), it has leading term
\[
M_{\alpha,d}
=
e^{i\ell\theta}z^{d_+}\overline z^{\,d_-}
+
O(|z|^{|d|+2}).
\]

Now consider the two-parameter family
\[
F_{\varepsilon,\eta}
=
\rho
+
\varepsilon L_\alpha
+
2\operatorname{Re}\left(\eta M_{\alpha,d}\right),
\]
where
\[
\varepsilon>0
\]
is small and
\[
\eta\in\mathbb C
\]
is small. Let
\[
R_{\varepsilon,\eta}(\theta)
\]
be the reduced function obtained by Morse--Bott reduction near \(C_\alpha\).

First put \(\eta=0\). The normal equation for
\[
\rho+\varepsilon L_\alpha
\]
has leading solution
\[
z_\varepsilon(\theta)
=
\frac{\varepsilon}{2}\overline A\,e^{iq\theta}
+
O(\varepsilon^2).
\]
Indeed,
\[
\rho=1-2|z|^2+O(|z|^4),
\]
and the normal-linear term is
\[
2\operatorname{Re}\left(Ae^{-iq\theta}z\right).
\]

We now vary the reduced function in a fixed complex direction
\[
\dot\eta\in\mathbb C.
\]
Since
\[
z_\varepsilon(\theta)
\]
solves the normal equation for
\[
\rho+\varepsilon L_\alpha,
\]
the envelope identity gives the first variation
\[
\left.
\frac{d}{ds}
R_{\varepsilon,s\dot\eta}(\theta)
\right|_{s=0}
=
2\operatorname{Re}
\left(
\dot\eta\,
M_{\alpha,d}(\theta,z_\varepsilon(\theta))
\right).
\]
Equivalently, the complex coefficient of \(e^{ip\theta}\) in this first
variation is the complex coefficient of \(e^{ip\theta}\) in
\[
\dot\eta\,
M_{\alpha,d}(\theta,z_\varepsilon(\theta)).
\]

Substituting the leading normal graph, we obtain
\[
M_{\alpha,d}(\theta,z_\varepsilon(\theta))
=
C_d\,\varepsilon^{|d|}
e^{i(\ell+qd)\theta}
+
O(\varepsilon^{|d|+2}),
\]
where
\[
C_d\neq0.
\]
Indeed, if \(d\geq0\), then
\[
z_\varepsilon^d
=
\left(\frac{\varepsilon}{2}\overline A\right)^d
e^{iqd\theta}
+
O(\varepsilon^{d+2}),
\]
whereas if \(d<0\), then
\[
\overline z_\varepsilon^{-d}
=
\left(\frac{\varepsilon}{2}A\right)^{-d}
e^{iqd\theta}
+
O(\varepsilon^{-d+2}).
\]
In both cases the phase is
\[
e^{i(\ell+qd)\theta}
=
e^{ip\theta}.
\]
Choosing
\[
\dot\eta
\]
so that
\[
\dot\eta C_d\neq0,
\]
we obtain a nonzero \(e^{ip\theta}\)-coefficient. Therefore the coefficient of
\(e^{ip\theta}\) in the part of heat weight
\[
\Omega_\alpha
\]
is not identically zero as a polynomial in the relevant spectral coefficients.

Therefore
\[
\mathfrak A_\alpha\not\equiv0.
\]

We now prove the corresponding statement for \(\mathfrak A_\beta\). Near
\[
C_\beta=\{\alpha=0\},
\]
write
\[
\alpha=z,
\qquad
\beta=\sqrt{1-|z|^2}e^{i\theta}.
\]
Let \(\mu_\beta\) be the lowest normal-linear frequency, so that
\[
q\mu_\beta+1\equiv0\pmod p.
\]
The lowest normal-linear branch has the form
\[
e^{i\mu_\beta\theta}z.
\]

This branch is realized by the harmonic monomial
\[
N_\beta=
\begin{cases}
\beta^{\mu_\beta}\alpha,
&
\mu_\beta>0,
\\[0.4em]
\overline\beta^{-\mu_\beta}\alpha,
&
\mu_\beta<0.
\end{cases}
\]
Indeed, near \(C_\beta\) this monomial has leading term
\[
e^{i\mu_\beta\theta}z.
\]
It is harmonic because, in each complex variable, it involves either the
variable or its conjugate, but not both. It is \(\Gamma\)-invariant because its
weight is
\[
q\mu_\beta+1\equiv0\pmod p.
\]

Let
\[
L_\beta=2\operatorname{Re}(B N_\beta),
\qquad B\in\mathbb C,\ B\neq0.
\]
The leading normal graph determined by
\[
\rho+\varepsilon L_\beta
\]
has phase
\[
z\sim e^{-i\mu_\beta\theta}.
\]
Put
\[
\tau_\beta=-\mu_\beta.
\]

Choose
\[
d\in\mathcal D_\beta
\]
and set
\[
\ell=p-\tau_\beta d.
\]
Then
\[
\ell+\tau_\beta d=p.
\]
Define
\[
\ell_+=\max\{\ell,0\},
\qquad
\ell_-=\max\{-\ell,0\},
\]
and
\[
d_+=\max\{d,0\},
\qquad
d_-=\max\{-d,0\}.
\]
Consider
\[
M_{\beta,d}
=
\beta^{\ell_+}
\overline{\beta}^{\,\ell_-}
\alpha^{d_+}
\overline{\alpha}^{\,d_-}.
\]
This monomial is harmonic, again because each complex variable appears either
holomorphically or anti-holomorphically, but not both.

It is \(\Gamma\)-invariant because its weight is
\[
q\ell+d.
\]
Using
\[
\ell=p-\tau_\beta d
\]
and
\[
q\tau_\beta\equiv1\pmod p,
\]
we get
\[
q\ell+d
\equiv
-q\tau_\beta d+d
\equiv
-d+d
\equiv0
\pmod p.
\]

Near \(C_\beta\), the monomial has leading term
\[
M_{\beta,d}
=
e^{i\ell\theta}z^{d_+}\overline z^{\,d_-}
+
O(|z|^{|d|+2}).
\]
Substituting the leading normal graph
\[
z\sim e^{i\tau_\beta\theta}
\]
gives phase
\[
e^{i(\ell+\tau_\beta d)\theta}
=
e^{ip\theta},
\]
with nonzero coefficient.

Therefore the same envelope-variation argument used near \(C_\alpha\) shows
that the \(e^{ip\theta}\)-coefficient in heat weight
\[
\Omega_\beta
\]
is not identically zero. Hence
\[
\mathfrak A_\beta\not\equiv0.
\]

The proof of the lemma is complete.
\end{proof}

The preceding weighted expansion lemma is the analytic device behind the
following uses of ``first heat weight''. Once a threshold \(\sigma\) is fixed,
all contributions of total heat weight strictly larger than \(\sigma\) are
\(o(e^{-\sigma t})\) in \(C^2\), and only finitely many spectral components and
Taylor terms can contribute at weights \(\le\sigma\). Thus the coefficient of a
given first heat weight is a finite algebraic expression, not a formal infinite
sum. This is the only role played below by the terminology of
weighted Taylor--Fourier trees.

The resonant heat-weight computation identifies the first possible reduced
frequency. We now use it to obtain the actual asymptotic form of the reduced
functions. The point is that the coefficient of the fundamental mode \(p\) is
asymptotic to the first resonant coefficient, while all remaining Fourier modes
are smaller in \(C^2\). This is precisely the form needed for the Fourier
counting lemma.

\begin{lemma}[Dominance of the first resonant harmonic]
Let \(p\ge3\), \(1<q<p/2\), and \((p,q)=1\). Let
\(f=f_0+\sum_{k\ge1}f_k\) be the spectral decomposition of \(f\). Assume
\(f_2=a\rho\), with \(a\ne0\),
that the lowest normal-linear coefficients near \(C_\alpha\) and \(C_\beta\)
are nonzero, and that \(\mathfrak A_\alpha(f)\ne0\) and
\(\mathfrak A_\beta(f)\ne0\). Let \(u(t)=e^{t\Delta}f\) and
\(U(t)=a^{-1}e^{\lambda_2t}(u(t)-f_0)\). Let \(R_{\alpha,t}\) and
\(R_{\beta,t}\) be the reduced functions obtained by Morse--Bott reduction near
\(C_\alpha\) and \(C_\beta\), respectively. Then
\[
R_{\alpha,t}(\theta)
=
c_{\alpha,t}
+
2\operatorname{Re}\left(A_{\alpha,t}e^{ip\theta}\right)
+
S_{\alpha,t}(\theta),
\]
and
\[
R_{\beta,t}(\theta)
=
c_{\beta,t}
+
2\operatorname{Re}\left(A_{\beta,t}e^{ip\theta}\right)
+
S_{\beta,t}(\theta),
\]
where \(A_{\alpha,t}\) and \(A_{\beta,t}\) are the total complex coefficients of
the Fourier mode \(e^{ip\theta}\) in the corresponding reduced functions, and
\[
A_{\alpha,t}\sim \mathfrak A_\alpha(f)e^{-\Omega_\alpha t},
\qquad
A_{\beta,t}\sim \mathfrak A_\beta(f)e^{-\Omega_\beta t}.
\]
Moreover,
\[
\frac{\|S_{\alpha,t}\|_{C^2}}{|A_{\alpha,t}|}\to0,
\qquad
\frac{\|S_{\beta,t}\|_{C^2}}{|A_{\beta,t}|}\to0.
\]
Consequently, for all sufficiently large \(t\), each reduced function has
exactly \(2p\) nondegenerate critical points.
\end{lemma}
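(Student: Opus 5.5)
We work near \(C_\alpha\); the argument near \(C_\beta\) is identical, with \(q\) replaced by \(\tau_\beta\) and \(\Omega_\alpha,\mathfrak A_\alpha\) replaced by \(\Omega_\beta,\mathfrak A_\beta\). The plan is to split the reduced function \(R_{\alpha,t}\) into its Fourier-mode-\(p\) part, its constant part, and everything else, and then to show that everything else is \(o(e^{-\Omega_\alpha t})\) in \(C^2\). Fix a threshold \(B\) with \(\Omega_\alpha<B\). Only finitely many relative heat weights \(\delta_k\) are \(\le B\). The spectral tail lemma shows that the remaining heat modes form a remainder that is \(o(e^{-Bt})\) in \(C^4\). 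We then apply the weighted Morse--Bott expansion lemma to \(U(t)=\rho+\sum_j e^{-\sigma_j t}\phi_j+R_t\). This expresses \(R_{\alpha,t}\), modulo \(o(e^{-Bt})\) in \(C^2\), as a finite sum \(\sum_{|n|_\sigma\le B}e^{-|n|_\sigma t}\Psi_n(\theta)\), where the coefficients \(\Psi_n\) are universal polynomials in the spectral data. Since \(U(t)\) converges to \(\rho\) in every \(C^r\), the normal-reduction lemma applies for large \(t\). By \(\Gamma\)-invariance, each \(\Psi_n\) involves only the frequencies \(0,\pm p,\pm 2p,\dots\), so we can sort it by Fourier mode.

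Next we organize the finite expansion by frequency. We define \(c_{\alpha,t}\) as the \(\theta\)-average of \(R_{\alpha,t}\), and \(A_{\alpha,t}\) as the exact \(e^{ip\theta}\) Fourier coefficient. The remainder \(S_{\alpha,t}\) then collects the modes \(\pm mp\) with \(m\ge2\), together with the \(o(e^{-Bt})\) error. By the resonant heat weights lemma, any nonzero contribution at frequency \(mp\) has weight at least \(\Omega_{\alpha,m}\). By the dominance lemma, \(\Omega_{\alpha,m}>\Omega_\alpha\) for every \(m\ge2\). Among the finitely many terms with weight \(\le B\), there is therefore a uniform gap \(\eta>0\), so the part of \(S_{\alpha,t}\) at frequencies \(|m|\ge2\) is \(O(e^{-(\Omega_\alpha+\eta)t})\) in \(C^2\). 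For the frequency-\(p\) mode itself, the terms of weight exactly \(\Omega_\alpha\) add up to \(\mathfrak A_\alpha(f)\) by definition, and every other contribution has weight strictly larger. Here the resonant-weights lemma also tells us that no weight below \(\Omega_\alpha\) produces frequency \(p\). Hence \(A_{\alpha,t}=\mathfrak A_\alpha(f)e^{-\Omega_\alpha t}(1+o(1))\). Because \(\mathfrak A_\alpha(f)\ne0\), this gives \(\|S_{\alpha,t}\|_{C^2}/|A_{\alpha,t}|=O(e^{-\eta' t})\to0\). Taking \(c_p\) from the Fourier counting lemma, we get \(\|S_{\alpha,t}\|_{C^2}<c_p|A_{\alpha,t}|\) for large \(t\). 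The lemma then yields exactly \(2p\) nondegenerate critical points of \(R_{\alpha,t}\).

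The main obstacle is the claim that the weight-\(\Omega_\alpha\) contributions to the mode \(e^{ip\theta}\) are exactly those counted by \(\mathfrak A_\alpha\), and that nothing of lower weight reaches frequency \(\pm p\). The resonant weights lemma bounds these weights through the leading normal graph branch together with minimal-degree monomials. We must also check that the terms of \(Y_t\) that do not come from the leading branch still have strictly larger weight when they are substituted, and that \(C^2\) norms of \(\theta\)-derivatives add only polynomial factors in \(p\), with no new exponential loss. We will handle this by inspecting the recursive structure of the \(\Psi_n\) from the proof of the weighted expansion lemma. Each such \(\Psi_n\) is a product of jets of the \(\phi_j\), and its phase and weight add under multiplication. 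This additivity reduces the check to the same phase–weight bookkeeping used in the resonant weights lemma.
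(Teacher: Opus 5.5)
Your argument is the paper's proof essentially step for step: Fourier-split $R_{\alpha,t}$, identify the weight-$\Omega_\alpha$ part of the $e^{ip\theta}$ coefficient with $\mathfrak A_\alpha(f)$, push every mode $\pm mp$ with $m\ge2$ to strictly larger heat weight via the resonant-weight and arithmetic dominance lemmas, and finish with the Fourier counting lemma; your threshold $B$ and uniform gap $\eta$ just make the paper's ``finite-weight observation'' explicit. One caution on the check in your last paragraph, which the paper delegates to the resonant heat weights lemma: the bound you quote, that every frequency-$mp$ contribution has weight at least $\Omega_{\alpha,m}$, fails for $m\ge2$, because the $z\overline z$ terms from $\alpha=\sqrt{1-|z|^2}\,e^{i\theta}$ cost no spherical degree and let a root monomial also sit inside the normal graph (for $L(7,2)$, two copies of $\alpha\beta^3$ and six of $\alpha^2\overline\beta$ give a nonzero $e^{14i\theta}$ term of weight $2\delta_4+6\delta_3=74<104=\Omega_{\alpha,2}$), so what you actually need, and must prove with bookkeeping that includes such nested substitutions, is only the weaker claim that all $m\ge2$ contributions have weight $>\Omega_\alpha$ (here $74>37=\Omega_\alpha$).
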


\begin{proof}
We prove the statement near \(C_\alpha\). The proof near \(C_\beta\) is the
same, using the corresponding notation.

After normalization,
\[
U(t)=a^{-1}e^{\lambda_2t}(u(t)-f_0),
\]
we have
\[
U(t)=\rho+r(t),
\qquad
\|r(t)\|_{C^2}\to0.
\]
Moreover, each spectral component of degree \(k\) appears with heat weight
\[
\delta_k=\lambda_k-\lambda_2.
\]

The normal reduction near \(C_\alpha\) gives a reduced function
\[
R_{\alpha,t}:S^1\to\mathbb R.
\]
Since \(U(t)\) is \(\Gamma\)-invariant, the reduced function is invariant under
the induced rotation
\[
\theta\mapsto \theta+\frac{2\pi}{p}.
\]
Therefore its nonconstant Fourier frequencies are
\[
\pm p,\ \pm2p,\ \pm3p,\ldots.
\]

Let
\[
A_{\alpha,t}
\]
denote the total complex coefficient of \(e^{ip\theta}\) in
\(R_{\alpha,t}\). Thus \(A_{\alpha,t}\) includes not only the terms of minimal
heat weight \(\Omega_\alpha\), but also all higher-weight contributions to the
same Fourier frequency \(p\).

By definition of the first resonant coefficient, the part of \(A_{\alpha,t}\)
having heat weight exactly \(\Omega_\alpha\) is
\[
\mathfrak A_\alpha(f)e^{-\Omega_\alpha t}.
\]
Every other contribution to the same Fourier coefficient \(e^{ip\theta}\) has
strictly larger heat weight. Hence
\[
A_{\alpha,t}
=
\mathfrak A_\alpha(f)e^{-\Omega_\alpha t}
+
o(e^{-\Omega_\alpha t}).
\]
Since
\[
\mathfrak A_\alpha(f)\ne0,
\]
we get
\[
A_{\alpha,t}\ne0
\]
for all sufficiently large \(t\), and
\[
|A_{\alpha,t}|
\sim
|\mathfrak A_\alpha(f)|e^{-\Omega_\alpha t}.
\]

Now define
\[
S_{\alpha,t}
\]
to be the sum of all Fourier modes of \(R_{\alpha,t}\) except the constant mode
and the modes
\[
\pm p.
\]
Thus \(S_{\alpha,t}\) contains only frequencies
\[
\pm mp,
\qquad
m\ge2.
\]

It is enough to consider the positive frequencies \(mp\), since the reduced
functions are real-valued and the coefficients of the modes \(-mp\) are the
complex conjugates of those of \(mp\).

By the minimum resonant heat weight lemma, the first possible heat weight for
the reduced frequency \(mp\) is
\[
\Omega_{\alpha,m}.
\]
By the dominance of the fundamental reduced frequency,
\[
\Omega_{\alpha,m}>\Omega_{\alpha,1}=\Omega_\alpha
\qquad
\text{for every }m\ge2.
\]
Therefore every Fourier mode appearing in \(S_{\alpha,t}\) has heat weight
strictly larger than
\[
\Omega_\alpha.
\]
By the finite-weight observation above, the contribution
of all terms of heat weight strictly larger than \(\Omega_\alpha\) is
\[
o(e^{-\Omega_\alpha t})
\]
in \(C^2\). Hence
\[
\|S_{\alpha,t}\|_{C^2}
=
o(e^{-\Omega_\alpha t}).
\]
Consequently,
\[
\frac{\|S_{\alpha,t}\|_{C^2}}
{|A_{\alpha,t}|}
\longrightarrow0.
\]

This proves the claimed asymptotic form near \(C_\alpha\):
\[
R_{\alpha,t}(\theta)
=
c_{\alpha,t}
+
2\operatorname{Re}
\left(
A_{\alpha,t}e^{ip\theta}
\right)
+
S_{\alpha,t}(\theta),
\]
with
\[
A_{\alpha,t}
\sim
\mathfrak A_\alpha(f)e^{-\Omega_\alpha t},
\qquad
\frac{\|S_{\alpha,t}\|_{C^2}}
{|A_{\alpha,t}|}
\to0.
\]

The argument near \(C_\beta\) is identical. Let
\[
A_{\beta,t}
\]
be the total complex coefficient of \(e^{ip\theta}\) in the reduced function
\(R_{\beta,t}\). Its minimal heat-weight contribution is
\[
\mathfrak A_\beta(f)e^{-\Omega_\beta t}.
\]
All other contributions to the same frequency \(p\) have strictly larger heat
weight. Hence
\[
A_{\beta,t}
=
\mathfrak A_\beta(f)e^{-\Omega_\beta t}
+
o(e^{-\Omega_\beta t}).
\]
Since
\[
\mathfrak A_\beta(f)\ne0,
\]
we have
\[
|A_{\beta,t}|
\sim
|\mathfrak A_\beta(f)|e^{-\Omega_\beta t}.
\]

Let
\[
S_{\beta,t}
\]
denote the sum of all Fourier modes of \(R_{\beta,t}\) except the constant mode
and the modes
\[
\pm p.
\]
Then \(S_{\beta,t}\) contains only frequencies
\[
\pm mp,
\qquad
m\ge2.
\]

It is enough to consider the positive frequencies \(mp\), since the reduced
functions are real-valued and the coefficients of the modes \(-mp\) are the
complex conjugates of those of \(mp\).

By the minimum resonant heat weight lemma and the dominance of the fundamental
reduced frequency,
\[
\Omega_{\beta,m}>\Omega_{\beta,1}=\Omega_\beta
\qquad
\text{for every }m\ge2.
\]
Therefore every Fourier mode appearing in \(S_{\beta,t}\) has heat weight
strictly larger than
\[
\Omega_\beta.
\]

By the finite-weight observation above, the contribution of all terms of heat
weight strictly larger than \(\Omega_\beta\) is
\[
o(e^{-\Omega_\beta t})
\]
in \(C^2\).

Hence
\[
\|S_{\beta,t}\|_{C^2}
=
o(e^{-\Omega_\beta t}).
\]
Consequently,
\[
\frac{\|S_{\beta,t}\|_{C^2}}
{|A_{\beta,t}|}
\to0.
\]

For all sufficiently large \(t\), both reduced functions therefore satisfy the
hypotheses of the Fourier counting lemma. Hence each reduced function has
exactly
\[
2p
\]
nondegenerate critical points.
\end{proof}

We can now finish the analysis of the case \(1<q<p/2\). The normalized heat
flow converges to the Morse--Bott function \(\rho\), whose critical set is the
union of the two circles \(C_\alpha\) and \(C_\beta\). Global localization
confines all critical points to small tubular neighborhoods of these circles,
and the preceding resonance analysis shows that each reduced one-dimensional
function has exactly \(2p\) nondegenerate critical points. Passing to the
quotient by \(\Gamma\) gives two critical points from each circle.

\begin{proposition}[Heat flow for \(1<q<p/2\)]
Let
\[
p\geq3,\qquad 1<q<p/2,\qquad (p,q)=1,
\]
and let
\[
L(p,q)=S^3/\Gamma,
\qquad
\gamma(\alpha,\beta)=(\zeta\alpha,\zeta^q\beta).
\]
Let
\[
u(t)=e^{t\Delta}f
\]
be the heat flow, and write
\[
f=f_0+\sum_{k\geq1}f_k
\]
for the spectral decomposition of \(f\).

Assume
\[
f_2=a\rho,\qquad a\neq0,
\]
where
\[
\rho(\alpha,\beta)=|\alpha|^2-|\beta|^2.
\]
Assume also that the lowest normal-linear coefficients near \(C_\alpha\) and
\(C_\beta\) are nonzero, and that the first resonant coefficients satisfy
\[
\mathfrak A_\alpha(f)\neq0,
\qquad
\mathfrak A_\beta(f)\neq0.
\]

Then there exists
\[
T_f>0
\]
such that, for every
\[
t>T_f,
\]
the function
\[
u(t):L(p,q)\to\mathbb R
\]
is Morse and has exactly four critical points. Their Morse indices are
\[
0,\quad1,\quad2,\quad3.
\]
\end{proposition}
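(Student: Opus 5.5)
The plan is to assemble the earlier lemmas, working first on the cover $S^3$ with the $\Gamma$-invariant lift and passing to the quotient at the end. Since multiplying by the positive constant $a^{-1}e^{\lambda_2 t}$ (when $a>0$) and subtracting a constant preserve critical points and Morse indices, it suffices to treat
\[
U(t)=a^{-1}e^{\lambda_2t}(u(t)-f_0).
\]
If $a<0$, then $U(t)$ has the same critical points as $u(t)$ with indices $i\mapsto 3-i$. Because $\{0,1,2,3\}$ is invariant under this reflection, the conclusion is unaffected.

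\emph{Step 1: localize the critical points.} By the spectral tail lemma with $K=3$, together with the fact that $E_1=0$ in this range, we have $U(t)=\rho+r(t)$ with $\|r(t)\|_{C^r}=O(e^{-\delta_3t})\to0$ for every $r$. Pick disjoint $\Gamma$-invariant tubular neighborhoods $U_\alpha,U_\beta$ of $C_\alpha$ and $C_\beta$. The global localization lemma then places all critical points of $U(t)$ in $U_\alpha\cup U_\beta$ once $t$ is large. The normal reduction lemma applies because $\|U(t)-\rho\|_{C^2}<\delta$, and it identifies the critical points in $U_\alpha$ with the zeros of $R_{\alpha,t}'$, and likewise near $C_\beta$.

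\emph{Step 2: count on the circle.} Under the stated hypotheses on $a$, the normal-linear coefficients, and $\mathfrak A_\alpha(f),\mathfrak A_\beta(f)$, the lemma on dominance of the first resonant harmonic shows that for large $t$ each of $R_{\alpha,t}$ and $R_{\beta,t}$ has exactly $2p$ nondegenerate critical points on $S^1$. These alternate between $p$ maxima and $p$ minima, as in the proof of the Fourier counting lemma (near each $\theta_\ell$ the sign of $R''$ agrees with the sign of $R_0''$). Thus $U(t)$ has $4p$ critical points on $S^3$.

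\emph{Step 3: nondegeneracy and indices.} At a critical point $x=\Phi_\alpha(\theta_0,Y_\alpha(\theta_0))$, I would change coordinates to $y'=y-Y_\alpha(\theta)$. In these coordinates $\partial_{y'}\widetilde F=0$ along $y'=0$, so the mixed block $\partial_\theta\partial_{y'}\widetilde F$ vanishes on $y'=0$. The Hessian is therefore block diagonal, with blocks $R_{\alpha,t}''(\theta_0)$ and $\partial^2_{y'}\widetilde F$. The normal block is $C^0$-close to the normal Hessian of $\rho$, which has index $2$ at $C_\alpha$ and index $0$ at $C_\beta$. Hence each critical point is nondegenerate, and its index is the normal index plus $0$ or $1$ according as $R''>0$ or $R''<0$. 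This is the content of the remark on indices after Morse--Bott breaking: near $C_\alpha$ the indices are $2$ ($p$ times) and $3$ ($p$ times), and near $C_\beta$ they are $0$ and $1$, each $p$ times.

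\emph{Step 4: descend to $L(p,q)$.} The group $\Gamma$ acts freely on $S^3$, preserves each circle, and acts on $C_\alpha$ (respectively $C_\beta$) by the rotation $\theta\mapsto\theta+2\pi/p$ (respectively by rotation through $2\pi q/p$). Both rotations are free and of order $p$. The reduced functions are invariant under these rotations by the uniqueness in the normal reduction lemma, so the rotations permute critical points of $R_{\alpha,t}$ and $R_{\beta,t}$ while preserving type. Therefore the $p$ maxima form a single orbit, and so do the $p$ minima. Since the covering map is a local isometry, Morse indices are preserved, and the quotient has exactly four nondegenerate critical points with indices $0,1,2,3$.

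I expect the main obstacle to be Step 3's identification of the index with the normal index plus the reduced index, which requires the uniform closeness of the normal Hessian to that of $\rho$. That closeness is guaranteed because $\|U(t)-\rho\|_{C^2}\to0$ and $Y_\alpha$ is small in $C^1$. Everything else is a direct invocation of the earlier lemmas.
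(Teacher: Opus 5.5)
Your proposal is correct and follows essentially the same route as the paper's proof. Both normalize to $U(t)=\rho+r(t)$ (handling $a<0$ by $i\mapsto 3-i$), localize the critical points near $C_\alpha\cup C_\beta$, reduce near each circle, invoke the dominance-of-the-first-resonant-harmonic lemma together with the Fourier counting lemma to get $2p$ nondegenerate critical points per circle, and then descend via the free $\Gamma$-action. Your Steps 3 and 4 only spell out details the paper asserts in one line: the block-diagonal Hessian in the coordinates $y'=y-Y(\theta)$, and the fact that on each circle the $p$ maxima and the $p$ minima each form a single $\Gamma$-orbit.
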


\begin{proof}
We lift the heat flow to \(S^3\). Constants do not affect critical points, so
we subtract \(f_0\), and then normalize by setting
\[
U(t)=a^{-1}e^{\lambda_2t}(u(t)-f_0).
\]
Then
\[
U(t)=\rho+r(t),
\qquad
\|r(t)\|_{C^2}\to0.
\]

The critical set of \(\rho\) is
\[
C_\alpha\cup C_\beta,
\]
where
\[
C_\alpha=\{\beta=0\},
\qquad
C_\beta=\{\alpha=0\}.
\]
By the global localization lemma, for all sufficiently large \(t\), every
critical point of \(U(t)\) lies in a small tubular neighborhood of
\[
C_\alpha\cup C_\beta.
\]

Near each critical circle we apply the Morse--Bott normal reduction. By the
dominance of the first resonant harmonic, the reduced functions near
\(C_\alpha\) and \(C_\beta\) satisfy the hypotheses of the Fourier counting
lemma for all sufficiently large \(t\). Hence each reduced function has exactly
\[
2p
\]
nondegenerate critical points.

The normal Hessian of \(\rho\) is nondegenerate transverse to both critical
circles. Therefore the critical points obtained from the reduced functions lift
to nondegenerate critical points of \(U(t)\). Thus \(U(t)\) has exactly
\[
2p+2p=4p
\]
critical points on \(S^3\), all nondegenerate.

Since \(u(t)\) is \(\Gamma\)-invariant, the set of critical points of its lift is
\(\Gamma\)-invariant. The action of \(\Gamma\) on \(S^3\) is free, so these
\(4p\) critical points split into orbits of cardinality \(p\). Hence the
descended function on \(L(p,q)\) has exactly
\[
4
\]
critical points.

It remains to record the indices. For the normalized function \(U(t)\), the
circle \(C_\beta\) has normal index \(0\), and its reduced critical points have
circle indices \(0\) and \(1\). Thus it contributes indices
\[
0,\quad1.
\]
The circle \(C_\alpha\) has normal index \(2\), and therefore contributes
indices
\[
2,\quad3.
\]
Thus the indices for \(U(t)\) are
\[
0,\quad1,\quad2,\quad3.
\]

Finally,
\[
u(t)-f_0
=
a e^{-\lambda_2t}U(t).
\]
If
\[
a>0,
\]
the Morse indices of \(u(t)\) agree with those of \(U(t)\). If
\[
a<0,
\]
they are replaced by
\[
i\longmapsto 3-i.
\]
In either case the four indices are exactly
\[
0,\quad1,\quad2,\quad3.
\]
This proves the proposition.
\end{proof}

\subsection[The case q = 1, p >= 3]{The case \(q=1,\;p\ge3\)}

We next consider the lens spaces
\[
L(p,1),\qquad p\geq3.
\]
This case is simpler than the general case \(1<q<p/2\), because the cyclic
action is along the Hopf fibers. As a result, all invariant eigenfunctions of
degree \(k<p\) are Hopf-basic. The leading nonconstant heat term is therefore
the pullback of a height function on \(S^2\). Its critical set on \(S^3\)
consists of two Hopf circles.

The first non-basic oscillation occurs at degree \(p\). Restricted to either
critical Hopf circle, this component has vertical Fourier frequency \(p\). For
generic data the corresponding coefficient is nonzero on both critical fibers.
Thus each Morse--Bott circle is broken into \(2p\) critical points on \(S^3\),
which descend to two critical points on the quotient \(L(p,1)\).

\begin{proposition}[Heat flow for \(L(p,1)\)]
Let
\[
p\geq3,
\qquad
L(p,1)=S^3/\Gamma,
\qquad
\gamma(\alpha,\beta)=(\zeta\alpha,\zeta\beta).
\]
Let
\[
u(t)=e^{t\Delta}f
\]
be the heat flow, and write
\[
f=f_0+\sum_{k\geq1}f_k
\]
for the spectral decomposition of \(f\).

Assume
\[
f_2\neq0.
\]
Then \(f_2\) is the pullback, by the Hopf map
\[
\pi:S^3\to S^2,
\]
of a nonzero linear function on \(S^2\). Let
\[
C_-,
\qquad
C_+
\]
be the two Hopf fibers over its critical points. Assume that the restriction of
\(f_p\) to each of the two fibers \(C_-\) and \(C_+\) has nonzero vertical
frequency \(p\).

Then there exists
\[
T_f>0
\]
such that, for every
\[
t>T_f,
\]
the function
\[
u(t):L(p,1)\to\mathbb R
\]
is Morse and has exactly four critical points. Their Morse indices are
\[
0,\quad1,\quad2,\quad3.
\]
\end{proposition}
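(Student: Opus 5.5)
I will follow the same scheme as for the case \(1<q<p/2\), but with the Hopf structure making the resonance analysis trivial.

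\textbf{Step 1: the low-degree spectrum is Hopf-basic.} For \(q=1\), a monomial \(\alpha^{a_1}\overline\alpha^{a_2}\beta^{b_1}\overline\beta^{b_2}\) of degree \(k\) is invariant iff \((a_1-a_2)+(b_1-b_2)\equiv0\pmod p\). The total phase is bounded by \(|(a_1-a_2)+(b_1-b_2)|\le k\) and has the parity of \(k\). Hence for \(k<p\) only total phase \(0\) occurs, so every \(f_k\) with \(k<p\) is invariant under the diagonal Hopf \(S^1\)-action, i.e. Hopf-basic. For \(k\) odd and \(k<p\), no total-phase-zero monomials exist, so \(E_k=0\).

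In degree \(2\) the basic harmonics are the pullbacks of the linear functions \(|\alpha|^2-|\beta|^2\), \(2\operatorname{Re}(\alpha\overline\beta)\), \(2\operatorname{Im}(\alpha\overline\beta)\) under the Hopf map. This identifies \(f_2=\pi^*\ell\) with \(\ell\) linear and nonzero. After a rotation in \(SU(2)\), which commutes with the diagonal \(\Gamma\) and preserves the metric, we may take \(f_2=a\rho\) with \(a>0\) and \(C_\pm=C_\alpha,C_\beta\).

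Normalize as before:
\[
U(t)=a^{-1}e^{\lambda_2t}(u(t)-f_0)=\rho+\sum_{k\ge3}e^{-\delta_kt}a^{-1}f_k .
\]
Split \(U(t)=\rho+B_t+V_t\), where \(B_t\) collects the degrees \(3\le k<p\) (all basic) and \(V_t\) the degrees \(\ge p\). The global localization lemma confines the critical points to tubular neighborhoods of \(C_\alpha\cup C_\beta\), and the normal reduction lemma yields reduced functions \(R_{\alpha,t}\), \(R_{\beta,t}\).

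\textbf{Step 2: asymptotics of the reduced function.} The function \(\rho+B_t\) is \(S^1\)-invariant, with a Morse--Bott critical circle near each \(C_\pm\). I will apply the normal reduction either to \(\rho+B_t\) itself, or to \(\rho\) with tubular coordinates adapted to \(\rho+B_t\). In either form, the reduced function of \(\rho+B_t\) is constant in \(\theta\). It may be cleanest to replace the fixed circles \(C_\pm\) by the critical fibers of the basic function \(\pi^*(\ell+\text{lower basic terms})\), which move with \(t\) but converge to \(C_\pm\).

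Adding \(V_t\), the implicit function theorem gives normal graph corrections of size \(O(e^{-\delta_pt})\). By the envelope identity, which was already used in the nontriviality lemma, the reduced function is
\[
R_t(\theta)=c_t+e^{-\delta_pt}a^{-1}f_p|_{C}(\theta)+O(e^{-2\delta_pt})+O(e^{-\delta_{p+1}t})
\]
in \(C^2\). The \(O(e^{-2\delta_p t})\) term comes from the quadratic dependence on the perturbation; since \(2\delta_p>\delta_{p+1}\) for \(p\ge3\), it is absorbed.

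The restriction of \(f_p\) to a fiber is \(\Gamma\)-invariant of degree \(p\), so it has Fourier frequencies in \(\{0,\pm p\}\). By hypothesis its \(e^{ip\theta}\) coefficient \(A\) is nonzero. Replacing the fiber \(C\) by the nearby moving critical fiber changes this coefficient only by \(o(1)\).

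\textbf{Step 3: critical-point count and indices.} The Fourier counting lemma, applied with \(A_t=e^{-\delta_pt}a^{-1}A\) and remainder \(o(e^{-\delta_pt})\), gives \(2p\) nondegenerate critical points on each circle. This gives \(4p\) on \(S^3\), and hence \(4\) on \(L(p,1)\), since \(\Gamma\) acts freely. The indices follow exactly as in the remark on Morse--Bott breaking and the end of the previous proposition: \(C_\beta\) contributes \(0,1\) and \(C_\alpha\) contributes \(2,3\), with the sign of \(a\) handled by \(i\mapsto3-i\).

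\textbf{Main obstacle.} The delicate step is Step 2. The basic terms \(B_t\) decay more slowly than \(e^{-\delta_pt}\), so the circle must be tracked as it moves. This requires a uniform version of the normal reduction in which the \(S^1\)-invariant part is absorbed exactly, leaving the \(\theta\)-dependence entirely at order \(e^{-\delta_pt}\).

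My first approach is to use Morse--Bott reduction relative to the \(S^1\)-invariant function \(\rho+B_t\). By equivariance, its normal graph is a fixed fiber and its reduced function is constant. I then treat \(V_t\) as the only non-invariant perturbation, checking that the constants in the implicit function theorem are uniform in \(t\), since \(\rho+B_t\to\rho\) in \(C^4\).
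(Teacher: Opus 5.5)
Your proposal is correct and follows essentially the same route as the paper. The Hopf-basic modes below degree \(p\) only displace the two critical fibers, and the reduction is carried out along the displaced fibers. The degree-\(p\) vertical mode supplies the leading \(e^{ip\theta}\) term, and the Fourier counting lemma together with the free \(\Gamma\)-action gives four critical points of indices \(0,1,2,3\). Your envelope identity relative to the \(S^1\)-invariant part \(\rho+B_t\) is a cleaner way of doing the paper's heat-weight bookkeeping.

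One small slip: \(2\delta_p>\delta_{p+1}\) fails for \(p=3\), since \(2\delta_3=14<16=\delta_4\). This is harmless, because you only need \(2\delta_p>\delta_p\) and \(\delta_{p+1}>\delta_p\) to make the remainder \(o(e^{-\delta_pt})\) in \(C^2\).
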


\begin{proof}
We lift the heat flow to \(S^3\). Constants do not affect critical points, so
we subtract \(f_0\) and normalize by setting
\[
U(t)=e^{\lambda_2t}(u(t)-f_0).
\]
Then
\[
U(t)=f_2+r(t),
\qquad
\|r(t)\|_{C^2}\to0.
\]

Since \(q=1\), the action of \(\Gamma\) is along the Hopf fibers. For every
degree
\[
k<p,
\]
a \(\Gamma\)-invariant harmonic of degree \(k\) has Hopf frequency
\[
0.
\]
Thus all invariant eigenspaces of degree \(k<p\) are Hopf-basic.

The leading term \(f_2\) is therefore the pullback of a nonzero linear function
on \(S^2\). Its critical set on \(S^3\) is the union of two Hopf fibers,
\[
C_-\cup C_+.
\]
This is a Morse--Bott critical set. The normal index is \(0\) along the fiber
over the minimum of the base height function, and \(2\) along the fiber over the
maximum.

By Morse--Bott localization, for all sufficiently large \(t\), every critical
point of \(U(t)\) lies in a small tubular neighborhood of
\[
C_-\cup C_+.
\]
Near each critical Hopf fiber, we apply Morse--Bott normal reduction.

A small point must be kept explicit.  The Hopf-basic components of degrees
\(3,\ldots,p-1\) do not create vertical oscillation, but they do move the two
critical points of the base function on \(S^2\).  Let
\[
h_t=f_2+\sum_{3\le k<p}e^{-(\lambda_k-\lambda_2)t}f_k
\]
be the Hopf-basic part below degree \(p\), viewed as a function on \(S^2\).
Since \(f_2\) is a Morse height function, its two critical points persist as
smooth points
\[
x_-(t),\qquad x_+(t),
\]
with \(x_\pm(t)\to x_\pm\).  Let
\[
C_\pm(t)=\pi^{-1}(x_\pm(t)).
\]
The normal reduction for the Hopf-basic part is therefore performed along the
slightly displaced fibers \(C_\pm(t)\), not necessarily along the limiting
fibers \(C_\pm\).  The degree \(p\) vertical coefficient is a smooth function of
the fiber.  Since, by hypothesis, this coefficient is nonzero on each limiting
fiber \(C_\pm\), it remains nonzero on \(C_\pm(t)\) for all sufficiently large
\(t\).

The first possible nonconstant vertical contribution therefore occurs in degree
\(p\), with nonzero coefficient on each displaced fiber.  Moreover, any reduced
vertical frequency different from \(0\) and \(\pm p\) has strictly larger heat
weight than the degree \(p\) contribution. Indeed, a linear spectral
contribution with vertical frequency \(\pm mp\), \(m\geq2\), must come from
degree at least \(mp\). Nonlinear contributions involving the degree \(p\)
vertical mode have total heat weight at least twice
\[
\lambda_p-\lambda_2,
\]
and hence are also of strictly higher heat weight. By the spectral tail estimate
and the weighted Morse--Bott expansion lemma, all these contributions are
\(o(e^{-(\lambda_p-\lambda_2)t})\) in \(C^2\).

Hence the reduced function on each Hopf circle has the form
\[
c_t+2\operatorname{Re}(A_t e^{ip\theta})+S_t(\theta),
\]
where
\[
A_t\sim A e^{-(\lambda_p-\lambda_2)t},
\qquad
A\neq0,
\]
and
\[
\frac{\|S_t\|_{C^2}}{|A_t|}\to0.
\]
The Fourier counting lemma then implies that each reduced function has exactly
\[
2p
\]
nondegenerate critical points for all sufficiently large \(t\).

Thus the lifted function \(U(t)\) has
\[
2p+2p=4p
\]
nondegenerate critical points on \(S^3\). Since the action of \(\Gamma\) is free,
these critical points descend in orbits of cardinality \(p\). Therefore the
descended function on
\[
L(p,1)
\]
has exactly
\[
4
\]
critical points.

The indices are obtained by adding the normal Morse--Bott index to the index of
the reduced one-dimensional critical point. The fiber over the base minimum
contributes indices
\[
0,\quad1,
\]
and the fiber over the base maximum contributes indices
\[
2,\quad3.
\]
Thus the four indices are
\[
0,\quad1,\quad2,\quad3.
\]
This proves the proposition.
\end{proof}

\subsection[The case L(2,1) = RP3]{The case \(L(2,1)=\mathbb{RP}^3\)}

We now treat the exceptional case
\[
L(2,1)=\mathbb{RP}^3.
\]
Here the invariant eigenfunctions are precisely the even spherical harmonics on
\(S^3\). The first nonconstant eigenspace is \(E_2\), which identifies with the
trace-free quadratic forms on \(\mathbb R^4\). For a generic quadratic form in
this space, the leading heat term is already Morse on \(\mathbb{RP}^3\). Thus,
in this case, the argument reduces to elementary linear algebra and stability of
Morse functions.

\begin{proposition}[Heat flow for \(\mathbb{RP}^3\)]
Let
\[
L(2,1)=\mathbb{RP}^3.
\]
Let
\[
u(t)=e^{t\Delta}f
\]
be the heat flow, and write
\[
f=f_0+\sum_{k\geq1}f_k
\]
for the spectral decomposition of \(f\). Assume that the \(E_2\)-component of
\(f\) is represented on \(S^3\subset\mathbb R^4\) by a trace-free quadratic form
\[
f_2(x)=x^TAx,
\]
where \(A\) is real symmetric and has four distinct eigenvalues.

Then there exists
\[
T_f>0
\]
such that, for every
\[
t>T_f,
\]
the function
\[
u(t):\mathbb{RP}^3\to\mathbb R
\]
is Morse and has exactly four critical points. Their Morse indices are
\[
0,\quad1,\quad2,\quad3.
\]
\end{proposition}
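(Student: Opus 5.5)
The plan is to show that the leading heat term is already Morse with four critical points on \(\mathbb{RP}^3\), and then transfer this to \(u(t)\) by stability of Morse functions under \(C^2\)-small perturbations. Subtract the constant and normalize, setting
\[
U(t)=e^{\lambda_2 t}(u(t)-f_0).
\]
For \(L(2,1)\) the invariant harmonics are the even ones, so \(E_1=0\) and the first nonconstant eigenspace is \(E_2\). The spectral tail lemma with \(K=3\) then gives
\[
\|U(t)-f_2\|_{C^2}=O(e^{-(\lambda_4-\lambda_2)t})\to0.
\]
In fact the next invariant degree is \(4\), but \(K=3\) suffices.

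Next I analyze \(f_2(x)=x^TAx\) on \(S^3\). Diagonalize \(A\) orthogonally with eigenvalues \(\mu_1<\mu_2<\mu_3<\mu_4\) and eigenvectors \(e_1,\dots,e_4\). By Lagrange multipliers, the critical points of \(f_2|_{S^3}\) are the unit eigenvectors \(\pm e_i\), giving \(8\) points on \(S^3\) and hence \(4\) points on \(\mathbb{RP}^3\).

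At \(e_i\) the tangent space is \(e_i^\perp\), and the Hessian of the restriction is
\[
2(A-\mu_i I)\big|_{e_i^\perp},
\]
with eigenvalues \(2(\mu_j-\mu_i)\) for \(j\neq i\). Because the eigenvalues of \(A\) are distinct, these are all nonzero, so each critical point is nondegenerate. Its index is the number of \(j\) with \(\mu_j<\mu_i\), namely \(i-1\). This produces indices \(0,1,2,3\) at \(\pm e_1,\dots,\pm e_4\).

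Finally I invoke stability, which is the step needing the most care. Since \(f_2\) descends to a Morse function \(\bar f_2\) on the compact manifold \(\mathbb{RP}^3\), there is \(\delta>0\) such that every \(G\) with \(\|G-\bar f_2\|_{C^2}<\delta\) is Morse with the same number of critical points and the same indices. To prove this, first use the global localization argument: \(|\nabla\bar f_2|\) is bounded below outside small balls around the critical points, so \(G\) has no critical points there. Inside each ball, the Hessian of \(G\) stays uniformly invertible with the same signature, so by the inverse function theorem (or a degree count) \(\nabla G\) has exactly one zero there, and its index equals that of \(\bar f_2\).

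Apply this to \(G=U(t)\) for \(t>T_f\). Because \(u(t)-f_0=e^{-\lambda_2 t}U(t)\) is a positive multiple of \(U(t)\), the critical points and indices of \(u(t)\) are the same as those of \(U(t)\). Hence \(u(t)\) is Morse on \(\mathbb{RP}^3\) with four critical points of indices \(0,1,2,3\).

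The main obstacle is bookkeeping rather than substance: the \(C^2\) convergence must be uniform on the quotient, which follows by working on \(S^3\) with even functions.
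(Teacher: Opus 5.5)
Your proposal is correct and follows the paper's proof essentially step for step: normalize to $U(t)=e^{\lambda_2 t}(u(t)-f_0)\to f_2$ in $C^2$, locate the critical points $\pm e_i$ of $x^TAx$ on $S^3$ with Hessian eigenvalues $2(\mu_j-\mu_i)$ and index $i-1$, and conclude by $C^2$-stability of Morse functions and positivity of the factor $e^{-\lambda_2 t}$; your localization and inverse-function sketch spells out the stability step that the paper simply invokes. One cosmetic slip: the tail lemma with $K=3$ literally gives the rate $e^{-(\lambda_3-\lambda_2)t}$, and the sharper rate $e^{-(\lambda_4-\lambda_2)t}$ needs $E_3=0$ (odd harmonics are not antipodally invariant), though either rate suffices.
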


\begin{proof}
We lift the heat flow to \(S^3\). Constants do not affect critical points, so
we subtract \(f_0\) and normalize by setting
\[
U(t)=e^{\lambda_2t}(u(t)-f_0).
\]
Then
\[
U(t)=f_2+r(t),
\qquad
\|r(t)\|_{C^2}\to0.
\]

The function
\[
f_2(x)=x^TAx
\]
is even, and therefore descends to \(\mathbb{RP}^3\). Since \(A\) is symmetric
with four distinct eigenvalues, 
choose an orthonormal basis
\[
e_1,e_2,e_3,e_4
\]
of eigenvectors, with corresponding eigenvalues
\[
\mu_1<\mu_2<\mu_3<\mu_4.
\]
On \(S^3\), the critical points of \(f_2\) are precisely
\[
\pm e_1,\ \pm e_2,\ \pm e_3,\ \pm e_4.
\]
After passing to the antipodal quotient, these give four critical points on
\[
\mathbb{RP}^3.
\]

The Hessian of \(f_2\) on \(S^3\), restricted to the tangent space at \(e_i\),
has eigenvalues proportional to
\[
\mu_j-\mu_i,
\qquad
j\neq i.
\]
Thus the Morse index of the critical point represented by \(e_i\) is the number
of eigenvalues smaller than \(\mu_i\). Hence the four indices are
\[
0,\quad1,\quad2,\quad3.
\]

In particular, \(f_2\) descends to a Morse function on \(\mathbb{RP}^3\) with
exactly four critical points.

Since
\[
U(t)\to f_2
\]
in \(C^2\), and Morse functions are stable under sufficiently small
\(C^2\)-perturbations, the descended function \(U(t)\) is Morse with the same
number of critical points and the same indices for all sufficiently large
\(t\). Finally,
\[
u(t)-f_0=e^{-\lambda_2t}U(t),
\]
with a positive scalar factor, so \(u(t)\) has the same critical points and
indices as \(U(t)\). This proves the proposition.
\end{proof}

\subsection{Genericity and the main theorem}

We now collect the genericity assumptions used in the three heat-flow
propositions. In each case these assumptions involve only finitely many spectral
components of the initial datum. They are expressed as nonvanishing conditions
for linear or polynomial functions on finite-dimensional eigenspaces, and hence
define open dense subsets.

\begin{lemma}[Generic spectral data]
Let
\[
L(p,q)=S^3/\Gamma,
\qquad
\gamma(\alpha,\beta)=(\zeta\alpha,\zeta^q\beta),
\qquad
\zeta=e^{2\pi i/p},
\]
where
\[
p\geq2,\qquad 1\leq q\leq p/2,\qquad (p,q)=1.
\]
There exists an open dense subset
\[
\mathcal U_{p,q}\subset L^2(L(p,q),\mathbb R)
\]
such that every
\[
f\in\mathcal U_{p,q}
\]
satisfies the genericity hypotheses of the corresponding heat-flow proposition:
the proposition for \(\mathbb{RP}^3\) if \(p=2\), the proposition for
\(L(p,1)\) if \(p\geq3\) and \(q=1\), and the proposition for
\(1<q<p/2\) otherwise.
\end{lemma}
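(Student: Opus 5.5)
The plan is to write each genericity hypothesis as the nonvanishing of a continuous function that depends only on finitely many spectral coordinates of \(f\). Each condition then cuts out an open dense set, and \(\mathcal U_{p,q}\) is the finite intersection of these sets.

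The basic tool is the following. For each \(k\), the spectral projection \(\Pi_k:L^2\to E_k\) is a continuous linear surjection onto a finite-dimensional space. Let \(V=E_{k_1}\oplus\cdots\oplus E_{k_N}\), and let \(P\) be a polynomial on \(V\) that is not identically zero. Its zero set \(Z\) is closed and nowhere dense in \(V\). The preimage \(\Pi^{-1}(V\setminus Z)\) is open because \(\Pi\) is continuous. It is dense because, given any \(f\), we can change only its \(V\)-component by an arbitrarily small amount to leave \(Z\), using density of \(V\setminus Z\) in \(V\). The same holds when \(P\) is only polynomial after multiplication by a power of \(a\) and we also impose \(a\ne0\): the set \(\{a\ne0,\ a^NP\ne0\}\) is still the complement of a proper algebraic subset. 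Finite intersections of open dense sets are open dense, and that is all the argument needs.

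The cases go as follows.

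\textbf{Case \(p=2\).} \(E_2\) is the space of trace-free symmetric \(4\times4\) matrices. The discriminant of the characteristic polynomial is a polynomial in \(A\). It is not identically zero, since \(\operatorname{diag}(-3,-1,1,3)\) has four distinct eigenvalues.

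\textbf{Case \(q=1\), \(p\ge3\).} The conditions are \(f_2\ne0\) and nonvanishing of the frequency-\(p\) vertical coefficient of \(f_p\) on both critical fibers \(C_\pm\). These fibers depend on \(f_2\) through the base critical points \(\pm v/|v|\), where \(v\in\mathbb R^3\) corresponds to \(f_2\). The vertical coefficient at a fiber over \(x\in S^2\) is a linear function of \(f_p\) and a smooth function of \(x\). The invariant degree-\(p\) harmonics include \(\alpha^{j}\beta^{p-j}\), whose restriction to any fiber is a nonzero multiple of \(e^{ip\theta}\). Using the \(SU(2)\)-equivariance that commutes with the fiber action, the map \(f_p\mapsto\) (coefficient on \(C_+\), coefficient on \(C_-\)) is therefore nonzero in each slot for every \(v\). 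The bad set is then closed, because it is defined by continuous conditions on \((f_2,f_p)\) on the open set \(f_2\ne0\). To get density, fix \(f_2\) and perturb \(f_p\): the bad set for fixed \(f_2\) is a union of two proper hyperplanes in \(E_p\).

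\textbf{Case \(1<q<p/2\).} The conditions are \(a\ne0\); nonvanishing of the two lowest normal-linear coefficients, which are linear coordinates on the components of \(E_{q+1}\) and \(E_{|\mu_\beta|+1}\) spanned by \(\overline\alpha^{q}\beta\) and \(N_\beta\); and \(\mathfrak A_\alpha,\mathfrak A_\beta\ne0\). The nontriviality lemma for the first resonant coefficients shows that these are nonzero polynomials, after clearing powers of \(a\), in finitely many spectral coordinates. Each condition therefore defines an open dense set, and we intersect the finitely many sets.

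The main obstacle I expect is this. The nontriviality lemma works with a special test family \(\rho+\varepsilon L_\alpha+2\operatorname{Re}(\eta M_{\alpha,d})\). We must check that this family really lies in the finite-dimensional coordinate space on which \(\mathfrak A_\alpha\) is a polynomial. We must also check that a nonzero value of the polynomial along that family forces it to be a nonzero polynomial on the whole space, so that it cannot vanish identically once the other coordinates are turned on. I will handle this by viewing \(\mathfrak A_\alpha\) as a polynomial in all relevant coordinates and evaluating it at the point where only \(a\), the normal-linear coordinate, and the \(M_{\alpha,d}\)-coordinate are nonzero. At that point the lemma gives the value \(\dot\eta C_d\cdot(\text{nonzero})\). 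One subtlety remains: different \(d\in\mathcal D_\alpha\) contribute to the same \(\mathfrak A_\alpha\). Setting all but one \(M_{\alpha,d}\) coordinate to zero isolates a single term, so no cancellation between them can occur.
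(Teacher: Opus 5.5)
Your proposal is correct and is essentially the paper's own argument. Each hypothesis is a nonvanishing condition on finitely many eigenspace components, pulled back through the continuous spectral projection: the discriminant for $p=2$; the two vertical coefficient functionals for $q=1$, with density obtained by first making $f_2\neq0$ and then moving $f_p$ off two proper kernels; and, for $1<q<p/2$, $a\neq0$, the two normal-linear coordinates, and the nontriviality lemma after clearing powers of $a$. One small correction: for $0<j<p$ a fixed $\alpha^j\beta^{p-j}$ vanishes on the fibers $\{\alpha=0\}$ and $\{\beta=0\}$, so it is not a nonzero multiple of $e^{ip\theta}$ on every fiber. Use instead the paper's observation that $\alpha^p$ or $\beta^p$ works on any given fiber, or rely on your $SU(2)$-transitivity remark, which already suffices.
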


\begin{proof}
We verify openness and density separately in the three cases, following the
hypotheses of the corresponding heat-flow propositions.

First consider
\[
p=2,\qquad q=1.
\]
The spectral projection
\[
L^2(\mathbb{RP}^3)\to E_2
\]
is continuous and surjective. The eigenspace \(E_2\) identifies with the space of
trace-free real symmetric \(4\times4\) matrices. The subset consisting of
matrices with four distinct eigenvalues is open and dense, because the condition
of having a repeated eigenvalue is algebraic of positive codimension. Therefore
its inverse image in \(L^2(\mathbb{RP}^3)\) is open and dense.

Now consider
\[
p\geq3,\qquad q=1.
\]
The condition
\[
f_2\neq0
\]
is open and dense in \(E_2\).  For each nonzero \(f_2\), the corresponding
height function on \(S^2\) has two critical points and hence determines two Hopf
fibers
\[
C_-(f_2),
\qquad
C_+(f_2).
\]
For each sign define the vertical coefficient functional
\[
\Lambda_\pm(f_2):E_p\longrightarrow \mathbb C
\]
by declaring \(\Lambda_\pm(f_2)(h)\) to be the coefficient of \(e^{ip\theta}\)
in the restriction of \(h\) to the Hopf fiber \(C_\pm(f_2)\), with respect to
any positively oriented fiber coordinate.  The nonvanishing of this coefficient
is independent of the chosen origin of \(\theta\).

For every nonzero \(f_2\), the maps \(\Lambda_\pm(f_2)\) are nonzero real-linear
maps.  Indeed, if
\[
C=\{(e^{i\theta}a,e^{i\theta}b):\theta\in\mathbb R/2\pi\mathbb Z\},
\]
then at least one of the restrictions of the invariant degree \(p\) harmonics
\[
\alpha^p,
\qquad
\beta^p
\]
has nonzero \(e^{ip\theta}\)-coefficient on \(C\).  Hence, for fixed nonzero
\(f_2\), the bad set of \(f_p\in E_p\) is
\[
\ker\Lambda_-(f_2)\cup\ker\Lambda_+(f_2),
\]
a finite union of proper real-linear subspaces.  Its complement is open and
dense in \(E_p\).

We now pass from fiberwise density to density in \(E_2\oplus E_p\).  The maps
\[
f_2\longmapsto C_\pm(f_2)
\]
are smooth on \(E_2\setminus\{0\}\), and therefore
\[
(f_2,f_p)\longmapsto \Lambda_\pm(f_2)(f_p)
\]
is continuous.  Thus the good set
\[
\mathcal G_{p,1}
=
\{(f_2,f_p): f_2\ne0,
\ \Lambda_-(f_2)(f_p)\ne0,
\ \Lambda_+(f_2)(f_p)\ne0\}
\]
is open.  It is dense because, given any \((f_2,f_p)\), one first perturbs
\(f_2\) arbitrarily little to make it nonzero, and then, for this fixed
\(f_2\), perturbs \(f_p\) arbitrarily little outside the two proper kernels
\(\ker\Lambda_-(f_2)\) and \(\ker\Lambda_+(f_2)\).  Taking the inverse image of
\(\mathcal G_{p,1}\) under the continuous spectral projection
\[
L^2(L(p,1))\to E_2\oplus E_p
\]
gives an open dense subset of \(L^2(L(p,1),\mathbb R)\).

Finally consider
\[
p\geq3,\qquad 1<q<p/2.
\]
In this case
\[
E_2=\mathbb R\rho.
\]
Thus
\[
f_2=a\rho,\qquad a\neq0,
\]
is an open dense condition on the \(E_2\)-component.

The coefficient of the lowest normal-linear branch near \(C_\alpha\) is a
nonzero real-linear functional on \(E_{q+1}\). For example, it is realized by
the \(\Gamma\)-invariant harmonic monomial
\[
\overline\alpha^{\,q}\beta.
\]
Similarly, the coefficient of the lowest normal-linear branch near \(C_\beta\)
is a nonzero real-linear functional on \(E_{|\mu_\beta|+1}\). It is realized by
the harmonic monomial
\[
N_\beta=
\begin{cases}
\beta^{\mu_\beta}\alpha,
&
\mu_\beta>0,
\\[0.4em]
\overline\beta^{-\mu_\beta}\alpha,
&
\mu_\beta<0.
\end{cases}
\]
This monomial is \(\Gamma\)-invariant because
\[
q\mu_\beta+1\equiv0\pmod p.
\]
Therefore the nonvanishing of the two lowest normal-linear coefficients is an
open dense condition on the corresponding finite-dimensional eigenspaces.

It remains to discuss the first resonant coefficients. These coefficients are
defined after normalizing the heat flow by the leading term
\[
f_2=a\rho.
\]
Equivalently, they are polynomial functions of the normalized spectral
coefficients
\[
a^{-1}f_k
\]
belonging to finitely many eigenspaces. Hence, as functions of the original
finite-dimensional spectral data, they are rational functions whose only
possible denominator is a power of \(a\). Since we are already working on the
open dense set
\[
a\neq0,
\]
the nonvanishing of
\[
\mathfrak A_\alpha(f),
\qquad
\mathfrak A_\beta(f)
\]
is equivalent to the nonvanishing of two nonzero real polynomials obtained by
clearing denominators.

By the nontriviality lemma for the first resonant coefficients, these
polynomials are not identically zero. Therefore the conditions
\[
\mathfrak A_\alpha(f)\neq0,
\qquad
\mathfrak A_\beta(f)\neq0
\]
are open and dense in the relevant finite-dimensional space of spectral
coefficients.

Altogether, in the case \(1<q<p/2\), the genericity assumptions are the
complement of a finite union of proper real algebraic subsets in a finite
dimensional direct sum of eigenspaces. Taking the inverse image under the
continuous spectral projection from
\[
L^2(L(p,q),\mathbb R)
\]
to this finite-dimensional direct sum gives an open dense subset of
\(L^2(L(p,q),\mathbb R)\).

This proves the lemma in all cases.
\end{proof}

\begin{proof}[Proof of Theorem~\ref{thm:main}]
Let
\[
\mathcal U_{p,q}\subset L^2(L(p,q),\mathbb R)
\]
be the open dense subset given by the generic spectral data lemma.

There are three cases.

If \(p=2\) and \(q=1\), then \(L(2,1)=\mathbb{RP}^3\). For every
\(f\in\mathcal U_{2,1}\), the hypotheses of the heat-flow proposition for
\(\mathbb{RP}^3\) are satisfied.
Hence the conclusion follows from that proposition.

If \(p\geq3\) and \(q=1\), then, for every \(f\in\mathcal U_{p,1}\),
the hypotheses of the heat-flow proposition for \(L(p,1)\) are satisfied.
Hence the conclusion follows from that proposition.

Finally, suppose \(p\geq3\) and \(1<q<p/2\). For every
\(f\in\mathcal U_{p,q}\), the hypotheses of the heat-flow proposition for
\(1<q<p/2\) are satisfied.
Hence the conclusion follows from that proposition.

These three cases exhaust all lens spaces in the stated normalization
\(p\geq2\), \(1\leq q\leq p/2\), and \((p,q)=1\). This proves the
theorem.
\end{proof}

\appendix

\section{The arithmetic dominance estimate}\label{app:arithmetic-dominance}

For completeness, we give the proof of the arithmetic lemma used in the
resonant weight computation.

\begin{lemma}[Dominance of the fundamental reduced frequency]
Let
\[
p\geq 3,\qquad 1<a<p/2,\qquad (a,p)=1.
\]
For \(k\geq1\), write
\[
\delta_k=k(k+2)-8.
\]
For each integer \(m\geq1\), define
\[
\Omega_a(m)
=
\min_{d\in\mathbb Z}
\left[
\delta_{|mp-ad|+|d|}
+
|d|\delta_{a+1}
\right].
\]
Then
\[
\Omega_a(1)<\Omega_a(m)
\qquad
\text{for every }m\geq2.
\]
\end{lemma}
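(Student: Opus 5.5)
The plan is to compare every competitor for \(\Omega_a(m)\), \(m\ge2\), with a specific competitor for \(\Omega_a(1)\). Write
\[
W_m(d)=\delta_{|mp-ad|+|d|}+|d|\delta_{a+1},
\]
and record two elementary facts about \(\delta_k=k(k+2)-8\) to use throughout.
\begin{enumerate}
\item \(\delta_k\) is strictly increasing for \(k\ge1\).
\item \(\delta\) is strictly superadditive up to the constant:
\[
\delta_{x+y}=\delta_x+\delta_y+2xy+8>\delta_x+\delta_y
\]
for \(x,y\ge0\) (with \(\delta_0=-8\)).
\end{enumerate}
The term \(|d|\delta_{a+1}\) is additive in \(|d|\). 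So the whole weight behaves like a strictly superadditive cost on the pairs \((\text{angular part},\text{normal part})=(mp-ad,\,d)\). The target is then: every pair with first coordinate \(mp\) can be split, without cancellation, into a pair realizing frequency \(p\) plus a nontrivial remainder.

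\textbf{Step 1 (trivial cases).} If \(d=0\), then \(W_m(0)=\delta_{mp}>\delta_p=W_1(0)\ge\Omega_a(1)\) by monotonicity. If \(d\) and \(mp-ad\) have opposite signs, or \(d<0\), then \(|mp-ad|+|d|\ge mp\). Hence again \(W_m(d)>\delta_p\ge\Omega_a(1)\). So only \(d>0\) with \(r:=mp-ad\) of either sign remains, and the case \(r\ge0\), \(d>0\) should be the main one.

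\textbf{Step 2 (splitting).} For \(d>0\), I would look for an integer \(0\le d_1\le d\) such that \(r_1=p-ad_1\) and \(r_2=(m-1)p-a(d-d_1)\) both have the same sign as \(r\) (or vanish). Then \(|r|=|r_1|+|r_2|\) and \(d=d_1+(d-d_1)\). Superadditivity then gives
\[
W_m(d)\ \ge\ W_1(d_1)+\bigl(\delta_{|r_2|+d-d_1}+(d-d_1)\delta_{a+1}\bigr)+8+2(\cdots),
\]
and the bracket is \(>-8\) as soon as \((r_2,d-d_1)\neq(0,0)\). That forces \(W_m(d)>W_1(d_1)\ge\Omega_a(1)\), and \((r_2,d-d_1)\neq(0,0)\) holds automatically since \(m\ge2\) and \(p\nmid a\cdot 0\) otherwise.
\begin{itemize}
\item For \(r\ge0\), the natural choice is \(d_1=\min\{d,\lfloor p/a\rfloor\}\). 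This gives \(r_1\ge0\), and \(r_2=r-r_1\) is nonnegative provided \(r\ge p-a\lfloor p/a\rfloor\), i.e. \(r\) is at least the residue of \(p\) mod \(a\).
\item For \(r<0\), the analogous choice is \(d_1=\lceil p/a\rceil\), which makes \(r_1\le0\).
\end{itemize}

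\textbf{Step 3 (main obstacle).} The difficulty is the boundary situation where \(0\le r<p \bmod a\) (or the symmetric one for \(r<0\)). Here the greedy split produces a small negative remainder, so the decomposition cancels. For these finitely many residue configurations I would argue directly.
\begin{itemize}
\item Since \((a,p)=1\) and \(1<a<p/2\), we have \(|r|<a\). Also \(d\approx mp/a\ge 2p/a\), so \(W_m(d)\ge \delta_{d}+d\,\delta_{a+1}\) with \(d\ge 2\lfloor p/a\rfloor\).
\item Compare this with the explicit competitor \(d_1=\lfloor p/a\rfloor\) or \(\lceil p/a\rceil\) for \(m=1\), whose cost is at most \(\delta_{a+d_1}+d_1\delta_{a+1}\).
\item Write \(d=d_1+d_2\) with \(d_2\ge d_1-1\ge1\). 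The extra normal cost \(d_2\delta_{a+1}\ge7d_2\), together with the cross term \(2d_1d_2\) from superadditivity, should dominate the possible excess \(\delta_{a+d_1}-\delta_{d}\) coming from the angular part, which is at most of order \(a(a+d_1)\). This is where \(a\ge2\) (so \(\delta_{a+1}\ge7\)) and \(a<p/2\) (so \(d_1\ge2\)) enter.
\end{itemize}
I expect this inequality check, done uniformly in \(a\) and \(p\), to be the genuinely delicate part. If the crude bound fails for small \(a\), I would treat \(a=2,3\) by hand, computing \(\Omega_a(1)\) and \(\Omega_a(2)\) explicitly. I would then note that \(m\ge3\) follows from \(m=2\), since the Step 2 splitting peels off one copy of \(p\) at a time.
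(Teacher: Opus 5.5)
Your route differs from the paper's and can be completed, but two of your remarks are false, and Step~3 has to be carried out for all $m$ at once.

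\textbf{Comparison with the paper.} The paper does not split integer configurations. After discarding $d<0$, it passes to the real relaxation $F_1(x)=\delta_{|p-ax|+x}+x\delta_{a+1}$ with $H_a=\inf_{x\ge0}F_1$. It then uses the scaling identity $\delta_{mD}-m\delta_D=(m-1)(mD^2+8)$, with $x=d/m$, to get $\Omega_a(m)>mH_a$ for every $m\ge2$ simultaneously. Next it computes $H_a$ explicitly as the minimum of a quadratic in $D=p-(a-1)x$. Finally it checks by polynomial estimates, in two cases, that $\Omega_a(1)\le\delta_{s+c}+s\delta_{a+1}<2H_a$, where $p=as+c$, $s=\lfloor p/a\rfloor$ and $1\le c\le a-1$. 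Your identity $\delta_{x+y}=\delta_x+\delta_y+2xy+8$ is the two-piece integer analogue of that scaling identity. It gives a more explicit statement: each frequency-$mp$ configuration strictly dominates a specific frequency-$p$ configuration. It also avoids computing $H_a$, at the cost of residue bookkeeping. The paper's relaxation avoids the bookkeeping and treats all $m$ uniformly, and it yields the linear bound $\Omega_a(m)>mH_a$, but its final inequality is messier.

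\textbf{Corrections.} In Step~1, the claim ``opposite signs implies $|mp-ad|+|d|\ge mp$'' is false when $d>0>mp-ad$. For example, $p=7$, $a=3$, $m=2$, $d=5$ gives $|r|+d=6<14$. Only $d\le0$ is disposed of there, which is all you actually use later.

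More seriously, the claim that $m\ge3$ follows from $m=2$ by peeling off copies of $p$ is wrong. The obstruction to peeling, $c-a<r<c$, does not depend on $m$. These $a-1$ consecutive integers meet every residue class mod $a$ except that of $c$, and $r\equiv mc\pmod a$. Hence there is an unsplittable $d$ for every $m$ with $a\nmid m-1$. For instance, with $p=7$, $a=3$, $m=3$, the configuration $d=7$, $r=0$ admits no cancellation-free split into frequencies $kp$ and $(3-k)p$ at all.

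So Step~3 must treat every $m\ge2$ directly. Fortunately this takes one line and needs no small-$a$ case analysis. An unsplittable $d$ satisfies $ad=mp-r>mp-c=mas+(m-1)c$, hence $d\ge ms+1\ge2s+1$. Compare with the competitor $d_1=s$, using monotonicity and $\delta_k=(k+1)^2-9$:
\[
W_m(d)-W_1(s)\ \ge\ \delta_{2s+1}-\delta_{s+a-1}+(s+1)\delta_{a+1}
=(2s+2)^2-(s+a)^2+(s+1)(a-1)(a+5).
\]
The right side equals $sa^2+2as+3s^2+3s+4a-1>0$. With this, Steps~1--3 give $W_m(d)>\Omega_a(1)$ for every $d\in\mathbb Z$ and every $m\ge2$, which is the lemma.
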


\begin{proof}
First, negative values of \(d\) never minimize. Indeed, if \(d<0\), then
\[
|mp-ad|+|d|
=
mp+(a+1)|d|,
\]
and therefore
\[
\delta_{|mp-ad|+|d|}+|d|\delta_{a+1}
>
\delta_{mp},
\]
which is the value obtained at \(d=0\). Hence
\[
\Omega_a(m)
=
\min_{d\geq0}
\left[
\delta_{|mp-ad|+d}
+
d\delta_{a+1}
\right].
\]

For \(x\geq0\), define the real relaxation
\[
F_m(x)
=
\delta_{|mp-ax|+x}
+
x\delta_{a+1}.
\]
Let
\[
H_a=\inf_{x\geq0}F_1(x).
\]
We first prove that
\[
\Omega_a(m)>mH_a
\qquad
\text{for every }m\geq2.
\]

Let \(d\geq0\) be an integer and set
\[
x=\frac{d}{m}.
\]
Then
\[
|mp-ad|+d
=
m\bigl(|p-ax|+x\bigr).
\]
Put
\[
D=|p-ax|+x.
\]
Thus
\[
|mp-ad|+d=mD.
\]
Since
\[
\delta_y=y^2+2y-8,
\]
we have
\[
\delta_{mD}-m\delta_D
=
(m-1)(mD^2+8)>0.
\]
Therefore
\[
\delta_{|mp-ad|+d}+d\delta_{a+1}
=
\delta_{mD}+mx\delta_{a+1}
>
m\delta_D+mx\delta_{a+1}
=
mF_1(x)
\geq
mH_a.
\]
Taking the minimum over all \(d\geq0\), we obtain
\[
\Omega_a(m)>mH_a.
\]

It remains to prove
\[
\Omega_a(1)<2H_a.
\]
Write
\[
p=as+r,
\qquad
s=\left\lfloor\frac pa\right\rfloor,
\qquad
1\leq r\leq a-1.
\]
The inequality \(1\le r\le a-1\) follows from \((a,p)=1\). Since \(p>2a\), we
also have
\[
s\geq2.
\]

Evaluating the discrete expression at \(d=s\), we get
\[
|p-as|+s=r+s.
\]
Hence
\[
\Omega_a(1)
\leq
U_a,
\]
where
\[
U_a=\delta_{s+r}+s\delta_{a+1}.
\]
It is enough to prove
\[
U_a<2H_a.
\]

We now compute \(H_a\). For
\[
0\leq x\leq \frac pa,
\]
set
\[
D=p-(a-1)x.
\]
Then
\[
x=\frac{p-D}{a-1},
\]
and
\[
F_1(x)
=
\delta_D+\frac{p-D}{a-1}\delta_{a+1}.
\]
Since
\[
\delta_{a+1}=a^2+4a-5=(a-1)(a+5),
\]
we obtain
\[
F_1(x)
=
D^2-(a+3)D+(a+5)p-8.
\]
The unconstrained minimum of this quadratic occurs at
\[
D=\frac{a+3}{2}.
\]

For
\[
x\geq \frac pa,
\]
the function \(F_1(x)\) is increasing away from the boundary
\(x=p/a\).

Indeed, for \(x\geq p/a\) we have
\[
|p-ax|+x=(a+1)x-p.
\]
Thus
\[
F_1(x)
=
\delta_{(a+1)x-p}
+
x\delta_{a+1}.
\]
Since
\[
\frac{d}{dx}F_1(x)
=
(a+1)\bigl(2((a+1)x-p)+2\bigr)
+
\delta_{a+1},
\]
and since \(x\geq p/a\) implies \((a+1)x-p\geq p/a>0\), this derivative is
strictly positive. Hence \(F_1\) is increasing on \([p/a,\infty)\).

Therefore
\[
H_a
=
\begin{cases}
(a+5)p-8-\dfrac{(a+3)^2}{4},
&
\dfrac pa\leq \dfrac{a+3}{2},
\\[1.2em]
\left(\dfrac pa\right)^2
-(a+3)\dfrac pa
+(a+5)p-8,
&
\dfrac pa> \dfrac{a+3}{2}.
\end{cases}
\]

We prove
\[
U_a<2H_a
\]
in the two cases.

First suppose
\[
\frac pa\leq \frac{a+3}{2}.
\]
Equivalently,
\[
2as+2r\leq a(a+3).
\]
A direct substitution gives
\[
2H_a-U_a
=
\frac12
\Bigl(
2a^2s-a^2+4ar+12as-6a
-2r^2-4rs+16r-2s^2+6s-25
\Bigr).
\]
Using
\[
1\leq r\leq a-1,
\]
we estimate
\[
-2r^2\geq -2(a-1)^2,
\qquad
-4rs\geq -4(a-1)s,
\]
and we discard the positive terms \(4ar\) and \(16r\). Thus
\[
2(2H_a-U_a)
\geq
2a^2s+8as+10s-2s^2-3a^2-2a-27.
\]
In the present case,
\[
s\leq \frac{a+3}{2}.
\]
The function
\[
G(s)=2a^2s+8as+10s-2s^2
\]
is increasing on the interval
\[
2\leq s\leq \frac{a+3}{2},
\]
because
\[
G'(s)=2a^2+8a+10-4s
\geq
2a^2+6a+4>0.
\]
Therefore
\[
G(s)\geq G(2)=4a^2+16a+12.
\]
Hence
\[
2(2H_a-U_a)
\geq
a^2+14a-15>0
\]
for all \(a\geq2\). Thus
\[
U_a<2H_a
\]
in the first case.

Now suppose
\[
\frac pa> \frac{a+3}{2}.
\]
A direct substitution gives
\[
2H_a-U_a=\frac{N}{a^2},
\]
where
\[
\begin{aligned}
N={}&
a^4s+2a^3r+4a^3s
-a^2r^2-2a^2rs+6a^2r
+a^2s^2-3a^2s-8a^2  \\
&\qquad
+4ars-6ar+2r^2.
\end{aligned}
\]
Again using
\[
1\leq r\leq a-1,
\]
we estimate
\[
-a^2r^2\geq -a^2(a-1)^2,
\]
\[
-2a^2rs\geq -2a^2(a-1)s,
\]
and
\[
-6ar\geq -6a(a-1).
\]
Discarding the remaining positive terms, we get
\[
N
\geq
a^2s(a^2+2a-1)
+
a^2s^2
-a^4+2a^3-15a^2+6a.
\]
Since
\[
s\geq2,
\]
this gives
\[
N
\geq
2a^2(a^2+2a-1)
+
4a^2
-a^4+2a^3-15a^2+6a.
\]
Thus
\[
N
\geq
a^4+6a^3-13a^2+6a
=
a(a-1)(a^2+7a-6).
\]
This is strictly positive for
\[
a\geq2.
\]
Therefore
\[
2H_a-U_a>0
\]
also in the second case.

We have proved
\[
\Omega_a(1)\leq U_a<2H_a.
\]
Moreover,
\[
U_a=\delta_{s+r}+s\delta_{a+1}>0,
\]
because
\[
s\geq2,\qquad r\geq1,
\]
so that
\[
s+r\geq3,
\]
and
\[
\delta_{s+r}>0,\qquad \delta_{a+1}>0.
\]
Hence
\[
H_a>0.
\]
Combining this with
\[
\Omega_a(m)>mH_a
\]
for \(m\geq2\), we obtain
\[
\Omega_a(m)>mH_a\geq2H_a>\Omega_a(1).
\]
Hence
\[
\Omega_a(1)<\Omega_a(m)
\qquad
\text{for every }m\geq2.
\]
\end{proof}

\section*{Disclosure statement}

No potential conflict of interest was reported by the authors.

\section*{Funding}

This research was funded by Universidad EAFIT (Colombia), project ``Study and
Applications of Diffusion Processes of Importance in Health and Computation''
(project code 11740052022).

\section*{Data availability statement}

No data sets are required to verify the results in this article. The numerical
experiments mentioned in the introduction served as a heuristic discovery
mechanism; the proof is entirely analytic.

\section*{AI usage disclosure}

The authors used ChatGPT (OpenAI; GPT-5.5 Thinking) as an AI-assisted tool for
idea exploration, mathematical discussion, LaTeX drafting, editing, and
manuscript preparation. The research program leading to this article predates
the use of AI tools by more than a decade. In the specific case of lens spaces,
the authors had previously observed that, although the leading heat mode need
not be Morse, adding small multiples of suitable eigenfunctions from higher
eigenspaces could produce Morse-minimal functions in the examples under study.
AI-assisted discussions were then used to explore this observation further and
helped formulate the Morse--Bott reduction used in the paper; in particular,
they helped clarify the perturbative role of resonant higher modes such as
\(\operatorname{Re}(\alpha^p+\beta^p)\) in breaking the two critical circles in
the case \(q=1\). Subsequent AI-assisted discussions also helped organize the
hierarchical conical-neighborhood framework suggested by the heat-flow
asymptotics and assisted with exposition, LaTeX editing, and preparation of the
final manuscript. No AI tool was used to generate experimental data, figures, or
synthetic research results. The authors checked all mathematical arguments,
computations, citations, and references, and take full responsibility for the
originality, accuracy, and integrity of the entire article.

\end{document}